\newtheorem{definition}{\bf Definition}[section]
\newtheorem{lemma}[definition]{\bf Lemma}
\newtheorem{theorem}[definition]{\bf Theorem}
\newtheorem{proposition}[definition]{\bf Proposition}
\newtheorem{remark}[definition]{\bf Remark}
\def\N{\ensuremath{\mathbb{N}}}
\def\C{\ensuremath{\mathbb{C}}}
\newcommand{\HC}{\mathcal{H}({\mathbb{C}})}
\begin{document}

\begin{frontmatter}

\title{Supercyclic properties of extended eigenoperators of the differentiation operator on the space of entire functions}

\author{Manuel Gonz\'alez}
\address{Department of Mathematics, 
University of Cantabria, Avenida de los Castros s/n, 39005-Santander,  Spain.} \ead{manuel.gonzalez@unican.es}

\author{Fernando Le\'on-Saavedra}
\address{ Department of Mathematics, 
University of C\'adiz, Avda. de la Universidad s/n,  11402-Jerez de la Frontera, Spain.} \ead{fernando.leon@uca.es}

\author{Mar\'{\i}a Pilar Romero de la Rosa}

\address{Department of Mathematics, University of C\'adiz, CASEM, Pol. R\'{\i}o San Pedro s/n, 11510-Puerto Real,  Spain.}
\ead{pilar.romero@uca.es}

\begin{abstract}
A continuous linear operator $L$ defined on the space of entire functions $\HC$ is said to be an extended $\lambda$-eigenoperator of the differentiation operator $D$ provided $DL=\lambda LD$. Here we fully characterize when an extended $\lambda$-eigenoperator of $D$ is supercyclic, it  has a hypercyclic subspace or  it has a supercyclic subspace.

\end{abstract}

\begin{keyword}
Space of entire functions\sep differentiation  operator \sep eigenoperators \sep supercyclic operators \sep hypercyclic subspaces.
\end{keyword}

\end{frontmatter}

\pagenumbering{arabic}

\section{Introduction}

A (continuous, linear) operator $T$ acting on a separable Fr\'echet space $\mathcal{X}$ is said to be {\it hypercyclic} if there exists $f\in \mathcal{X}$ such that $\{T^nf\}_{n\geq 1}$ is dense in $\mathcal{X}$. 
 In this case we say that the vector $f$ is hypercyclic for $T$. To say that $T$ is {\it supercyclic} means that there exists a vector $x\in\mathcal{X}$ such that the projective orbit $\{\lambda T^nx\,\,:\,\,n\in\mathbb{N},\lambda\in\mathbb{C}\}$ is dense in $\mathcal{X}$.
 The  operator $T$ is said to have a  {\it hypercyclic (supercyclic) subspace} if there exists an infinite dimensional closed subspace  whose non-zero vectors are hypercyclic (supercyclic) for $T$. The study of hypercyclic (supercyclic) subspaces is a mainstream research line in the theory of hypercyclic operators (see \cite{erdmannperis} Chapter 10, \cite{bayartmatheron} Chapter 8,  \cite{montessalas}), and its interest come from the invariant subspace problem. 
 
 Over a century ago,  Birkhoff \cite{birkhof} proved that the translation operator is hypercyclic on $\HC$ endowed with the compact open topology. Later, MacLane \cite{maclane} proved that the differentiation operator is also hypercyclic. These seem to be the first examples of hypercyclic operators. Godefroy and Shapiro
 \cite{godefroyshapiro} unified the results of Birkhoff and MacLane by showing that each non-scalar operator $A$ (i.e., $A\neq \lambda I$) commuting with $D$ is hypercyclic. 
 
An operator $T$ is said to {\it  $\lambda$-commute with $A$} provided $AT=\lambda TA$, where  $\lambda \in \mathbb{C}$. If $T\neq 0$, then $T$ is called an {\it extended $\lambda$-eigenoperator} of $A$ with {\it extended eigenvalue} $\lambda$. 
Extended $\lambda$-eigenoperators   appear in the proof of an extension of Lomonosov's famous result, which was obtained independently by Scott Brown and Kim, Moore and Pearcy \cite{inv1,inv2}. They proved that if an operator has a compact extended $\lambda$-eigenoperator then it has a non-trivial hyperinvariant subspace.
The determination of the extended $\lambda$-eigenoperators plays an important role in the study of an operator, and some properties, like hypercyclicity, are transferred in some way to the commutant (see  \cite{godefroyshapiro}). 

In this paper we study how the hypercyclic properties are transferred to the $\lambda$-commutant. Some results in \cite{jfa} reveal  that the hypercyclic properties for the extended $\lambda$-eigenoperators of $D$ enjoy a rich structure.

For the differentiation operator $D$ on $\HC$, Bonilla and Grosse-Erdmann studied how frequent hypercyclicity is transferred to the commutant of $D$  (see \cite{bonilla}).  Recall that an operator $T$ on a Fréchet space $\mathcal{X}$ is said to be \emph{frequently hypercyclic} if there exists a vector $x\in \mathcal{X}$ such that for each nonempty open subset $U$ of $\mathcal{X}$ the set
$N_U=\{n\in\mathbb{N}\,: \,T^nx\in U\}$
has lower density in $\mathbb{N}$.
The study in \cite{freq} reveals that  frequent hypercyclicity is not always  transferred to the $\lambda$-commutant. 

It has also been studied when an operator that commutes with $D$ has an   hypercyclic subspace. S. Shkarin \cite{shkarin} began by proving that $D$ has a hypercyclic subspace. 
Partial results were obtained by H. Petterson \cite{petersson} and completed by Q. Mennet \cite{menet}, obtaining the so called  Petersson-Menet-Shkarin's result which asserts that each non-scalar operator commuting with $D$ has a hypercyclic subspace. Thus the following question arises:

\begin{quote}
{\bf Question 1. } Assume that $L$ is an extended $\lambda$-eigenoperator of $D$. When does $L$ have a  hypercyclic subspace?
\end{quote}

In strong contrast with Petersson-Menet-Shkarin's result, an extended $\lambda$-eigenoperator $L$ of $D$ does not always have a hypercyclic subspace. Indeed, if we consider the following 
extended $\lambda$-eigenoperators of the differentiation operator: 
$T_{\lambda,b}f=f'(\lambda z+b)$, introduced by Aron-Markose \cite{aron}, it was shown in  \cite{pilar}  that $T_{\lambda,b}$ has a hypercyclic subspace if and only if $|\lambda|=1$.

It is known from \cite{jfa} that if $L$ is an extended $\lambda$ eigenoperator of $D$ then $L$ factorizes as $L=R_\lambda\phi(D)$ where $R_\lambda f(z)=f(\lambda z)$ is the dilation operator and $\phi$ is an entire function of exponential type, that is, there exist constants $A,B$ such that $|\phi(z)|\leq A e^{B|z|}$ for all $z\in \mathbb{C}$. 

The results in \cite{pilar} may suggest that  extended $\lambda$-eigenoperators  with $|\lambda|>1$ have no hypercyclic subspace. For  $T_{\lambda,b}$ this is true because in the factorization $T_{\lambda,b}=R_\lambda \phi(D)$ the map $\phi(z)=ze^{bz}$ has only one zero. However the situation is quite different when $\phi$ has infinitely many zeros. Our first main result fully answer Question 1.
\medskip

{\bf Main result 1.} {\it Assume that $L=R_\lambda \phi(D)$ is an extended $\lambda$-eigenoperator of $D$. The following conditions are equivalent:
\begin{enumerate}
\item $L=R_\lambda\phi(D)$ has a hypercyclic subspace.
\item $\phi$ has infinitely many zeros and $|\lambda|\geq 1$ or $\phi$ has a finite (non empty) number of zeros and $|\lambda|=1$.
\end{enumerate}
}

The proof of this result splits into several cases. 
If we denote $\mathcal{Z}(\phi)$ the set of zeros of $\phi$ and $HC_\infty$ the set of all operators having a hypercyclic subspace, Table \ref{tabla1} below summarizes the results.

\begin{table}[h]
\centering
    \begin{tabular}{|c|c|c|c|}
    \hline
    $L=R_\lambda\phi(D)$ & $\mathcal{Z}(\phi)=\emptyset$  & $\mathcal{Z}(\phi)\neq \emptyset$ finite  & $\mathcal{Z}(\phi)$ infinite\\
    \hline

     $|\lambda|<1$ &\cellcolor{gray!25}  $L\notin HC_\infty$  \cite{jfa}&   \cellcolor{gray!25} $L\notin HC_\infty$ \cite{jfa} & \cellcolor{gray!25} $L\notin HC_\infty$   \cite{jfa}  \\
     \hline
     $|\lambda|=1$ & \cellcolor{gray!25}$L\notin HC_\infty$ \cite{jfa}   & $L\in HC_\infty$; Th. \ref{modulo1}& $L\in HC_\infty$;  Prop. \ref{infinitosceros} \\
     \hline
     $|\lambda|>1$ &  \cellcolor{gray!25} $L\notin HC_\infty$ \cite{jfa}    &  $L\notin HC_\infty$; Th. \ref{modulomayoruno} & $L\in HC_\infty$; Prop. \ref{infinitosceros}    \\
     \hline
    \end{tabular}
\caption{Hypercyclic subspaces}
    \label{tabla1}
\end{table}

The shaded entries in Table \ref{tabla1} correspond with results discovered in \cite{jfa}. In those cases the operator $L=R_\lambda\phi(D)$ is not hypercyclic, therefore it has no hypercyclic subspaces.

Moreover, we devote some effort to sharpen the results of Table \ref{tabla1}. Let us denote by $SC_\infty$ the set all supercyclic operators having a supercyclic subspace. For $L\notin HC_\infty$, we want to know whether $L\in SC_\infty$, and we begin studying the supercyclicity of the extended $\lambda$-eigenoperators of $D$. 

\begin{quote}
{\bf Question 2.} Let $L$ be a non-hypercyclic extended $\lambda$-eigenoperator of $D$. Is $L$  supercyclic?
\end{quote}


\begin{quote}
{\bf Question 3.} Let $L$ be a supercyclic extended $\lambda$-eigenoperator of $D$. Does $L$ have a  supercyclic subspace?
\end{quote}

When $\phi$ has no zeros, then the operator $L=R_\lambda\phi(D)$ is a composition operator $C_{\lambda,b}f(z)=f(\lambda z+b)$
induced by an affine endomorphisms. 
In this case, it was proved by Bernal-González, Bonilla and Calderón-Moreno \cite{bernalbonillacalderon} that $L$ is not supercyclic (see the shaded entries in Table \ref{tabla2}). 

For Question 2 it is enough to study the case $|\lambda|<1$ because in other cases the operator is hypercyclic. 
We fully answer this question and our results sheds light on an additional, unexpected condition: The value of $\phi$ at the origin. When $\phi(0)=0$ we can obtain a positive result using standard arguments. However,
the case $\phi(0)\neq 0$, requires more efforts and  new ideas. In general, to prove that an operator is not supercyclic is much more complicated than proving that it is. Denoting by $SC$ and $HC$ the set of all supercyclic and hypercyclic operators respectively, we obtain:
\medskip

{\bf Main result 2. }{\it Assume that $L=R_\lambda \phi(D)$ is an extended $\lambda$-eigenoperator of $D$. The following conditions are equivalent:
\begin{enumerate}
    \item $L\in SC\setminus HC$.
    \item $|\lambda|<1$ and $\phi(0)=0$.
\end{enumerate}
}

Again the proof splits into two  cases, and Table \ref{tabla2} summarizes the results.

\begin{table}[h]
    \centering
    \begin{tabular}{|c|c|c|c|}
\hline
    $L=R_\lambda\phi(D)$ & $\mathcal{Z}(\phi)=\emptyset$  & $\phi(0)=0$ & $\mathcal{Z}(\phi)\neq \emptyset$, $\phi(0)\neq 0$ \\
    \hline
     $|\lambda|<1$ & \cellcolor{gray!25} $L\notin SC$ \cite{bernalbonillacalderon}  &    $L\in SC\setminus HC$; Th. \ref{super1} & $L\notin SC$;  Th. \ref{super2}   \\
     \hline
    \end{tabular}
    \caption{Supercyclicity}
    \label{tabla2}
\end{table}

 We also study when an extended $\lambda$-eigenoperator has a supercyclic subspace. The remaining cases are: 1) $|\lambda|<1$ and $\phi(0) =0$ (Theorem \ref{modulomenor1}), in which there exists a supercyclic subspace, and 2) $|\lambda |> 1$ and $\phi(D)$ has a non-emtpy finite number of zeros, in which  the operator does not have a supercyclic subspace. The proof uses a mix between the ideas of \cite{menet} and \cite{montessalas}, however these ideas separately seem not enough to obtain the desired result. In summary: 
\medskip

{\bf Main result 3.} {\it Let $L$ be an extended $\lambda$-eigenoperator of $D$, $\lambda\neq 1$. The following conditions are equivalent:
\begin{enumerate}
\item $L\in SC_\infty\setminus HC_\infty$.
\item $0<|\lambda|<1$ and $\phi(0)=0$.
\end{enumerate}
}

Table \ref{tabla3} summarizes the results and points at the places in the paper where those results can be found.. Here the shaded entries in the first column follow by the non-supercyclicity of the operator (\cite{bernalbonillacalderon}), and the other shaded  entries follow from the study on hypercyclic subspaces (see Table \ref{tabla1}).

    \begin{table}[h]
    \centering
    \begin{tabular}{|c|c|c|c|}
    \hline
    $|\lambda|<1$ & $\mathcal{Z}(\phi)=\emptyset$  & $0\in \mathcal{Z}(\phi)$   & $0\notin \mathcal{Z}(\phi)$ \\
    \hline
      & \cellcolor{gray!25}$L\notin SC_\infty$ \cite{bernalbonillacalderon} &    $L\in SC_\infty\setminus HC_\infty$; Th. \ref{modulomenor1}  & \cellcolor{gray!25} $L\notin SC_\infty$  (Th. \ref{super2})   \\
     \hline
     $|\lambda|=1$ & $\mathcal{Z}(\phi)=\emptyset$  & $\mathcal{Z}(\phi)\neq \emptyset$ finite  & $\mathcal{Z}(\phi)$ infinite\\
     \hline
     & \cellcolor{gray!25}$L\notin SC_\infty$  \cite{bernalbonillacalderon} & \cellcolor{gray!25}$L\in HC_\infty$; Th. \ref{modulo1}  & \cellcolor{gray!25}$L\in HC_\infty$ ; Prop.\ref{infinitosceros}  \\
     \hline
     $|\lambda|>1$ & $\mathcal{Z}(\phi)=\emptyset$  & $\mathcal{Z}(\phi)\neq \emptyset$ finite  & $\mathcal{Z}(\phi)$ infinite\\
     \hline
      & \cellcolor{gray!25} $L\notin SC_\infty$   \cite{bernalbonillacalderon}  &  $L\notin SC_\infty$;  Th.\ref{general} & \cellcolor{gray!25}$L\in HC_\infty$; Prop.\ref{infinitosceros}    \\
     \hline
    \end{tabular}
    \caption{Supercyclic subspaces}
    \label{tabla3}
\end{table}

The main ancestors of this paper are  \cite{gonzalezleonmontes} and \cite{leonmuller}. Many of the results in these two papers were extended to operators on Fréchet spaces in \cite{menet}. 

The paper is structured as follows. In Section \ref{secciondos} we describe some tools from \cite{petersson} and \cite{menet}  that we will use throughout the paper.
In Section \ref{secciontres} we prove Main result 1. The case when $\phi$ has finitely many zeros is reduced to the case of a polynomial $P$ by showing that the operators $R_\lambda\phi(D)$ and $R_\lambda P(D)$ are  similar. In Section \ref{seccioncuatro} we characterize when $R_\lambda \phi(D)$ is supercyclic in terms of an intriguing and unexpected condition on the values of $\phi(0)$.  Finally, in Section \ref{seccioncinco} we prove the Main result 3. 
\medskip

{\bf Acknowledgement. } The authors were supported by Ministerio de Ciencia, Innovaci\'on y Universidades (Spain), grants PGC2018-101514-B-I00,  PID2019-103961GB-C22, and by  Vicerrectorado de Investigaci\'on de la Universidad de C\'adiz.

\section{Some preliminary results}

\label{secciondos}

Here we introduce some tools  we will use along the paper. Let $\mathcal{X}$ be a separable Fr\'echet space. Next result is a version of the Hypercyclicity Criterion discovered by Bès-Peris (see Theorem 3.24 in \cite{erdmannperis}).

\begin{theorem}[Hypercyclicity criterion for sequences]
\label{HPforseq}
Let $(T_n)$ be a sequence of operators acting on $\mathcal{X}$. Suppose that there exist two dense subsets $X_0, Y_0$ of $\mathcal{X}$, a subsequence $(T_{n_k})$ and a sequence of maps $S_{k}:Y_0\to\mathcal{X}$ satisfying:
\begin{description}
\item[i)] $T_{n_k} x_0\to 0$ for all $x_0\in X_0$.
\item[ii)] $S_ky_0\to 0$ and 
\item[iii)] $T_{n_k}S_ky_0\to y_0$ for all $y_0\in Y_0$.
\end{description}
Then there exists $x\in \mathcal{X}$ such that the set $\{T_{n_k}x : k\in\N \}$ is dense in $\mathcal{X}$.
\end{theorem}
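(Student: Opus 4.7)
The plan is to prove this by a standard Baire category argument, showing that the set of vectors $x$ for which $\{T_{n_k}x : k\in\N\}$ is dense is itself a dense $G_\delta$ in $\mathcal{X}$. Since $\mathcal{X}$ is a separable Fr\'echet space, it is completely metrizable and admits a countable base $(V_j)_{j\in\N}$ of nonempty open sets.

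First I would introduce, for each $j\in\N$, the set
\[
A_j := \bigcup_{k\in\N} T_{n_k}^{-1}(V_j) = \{x\in\mathcal{X} : T_{n_k}x \in V_j \text{ for some } k\in\N\}.
\]
Each $A_j$ is open by the continuity of the $T_{n_k}$, and a vector $x$ has $\{T_{n_k}x\}$ dense in $\mathcal{X}$ if and only if $x\in \bigcap_{j} A_j$. Hence, granting that each $A_j$ is dense, the Baire category theorem produces such an $x$ (in fact a dense $G_\delta$ of them).

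The main work is to establish density of $A_j$. Fix a nonempty open set $U\subset\mathcal{X}$ and an index $j$. Using that $X_0$ and $Y_0$ are dense, I would pick $x_0\in X_0\cap U$ and $y_0\in Y_0\cap V_j$, and set
\[
z_k := x_0 + S_k y_0.
\]
By hypothesis (ii), $S_k y_0 \to 0$, so $z_k \to x_0$ and therefore $z_k\in U$ for all sufficiently large $k$. On the other hand,
\[
T_{n_k} z_k \;=\; T_{n_k} x_0 \;+\; T_{n_k} S_k y_0 \;\longrightarrow\; 0 + y_0 \;=\; y_0 \;\in\; V_j,
\]
by (i) and (iii), so $T_{n_k} z_k\in V_j$ for all sufficiently large $k$. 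For any $k$ large enough to satisfy both conditions, $z_k\in U\cap A_j$, proving $A_j\cap U\neq\emptyset$.

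The step I expect to require the most care is the very last one: one must choose $k$ large enough so that \emph{simultaneously} $z_k\in U$ and $T_{n_k}z_k\in V_j$, which is straightforward here because both convergences are along the same index $k$, but it is exactly the reason the maps $S_k$ are indexed by $k$ (rather than by $n_k$) and why the three conditions must be coupled through the same subsequence. Once density of every $A_j$ is obtained, Baire yields the dense $G_\delta$ set $\bigcap_j A_j$ of vectors $x$ whose $(T_{n_k})$-orbit is dense in $\mathcal{X}$, completing the proof.
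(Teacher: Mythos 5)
Your proof is correct and is precisely the standard Baire-category argument for the Hypercyclicity Criterion; the paper itself gives no proof of this statement but simply quotes it from Grosse-Erdmann and Peris (Theorem 3.24 of \emph{Linear chaos}), where the argument is exactly the one you give (reduce to density of each open set $\bigcup_k T_{n_k}^{-1}(V_j)$ via the vectors $z_k=x_0+S_ky_0$, then apply Baire). Nothing to correct.
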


We say that an operator $T$ on $\mathcal{X}$ \emph{satisfies the Hypercyclicity Criterion for the sequence $(n_k)$} if the hypothesis of Theorem \ref{HPforseq} holds for the sequence of powers $(T^n)$, and the sequence $(n_k)$. 

To obtain an infinite dimensional closed subspace of hypercyclic vectors, we need a stronger  Hypercyclicity Criterion involving an infinite dimensional closed subspace $M_0$ where we can control the orbits. Such a result appeared firstly in \cite{gonzalezleonmontes} in the Banach space setting. Here we will use the following Fréchet space extension by Petersson  \cite{petersson}.

\begin{theorem}
\label{yes}
Let $(T_n)$ be a sequence of operators on a Fr\'echet space $\mathcal{X}$ with a continuous norm. If $(T_n)$ satisfies the Hypercyclicity Criterion for a  subsequence $(n_k)$ of $\N$ and there exists an infinite dimensional closed subspace $M_0\subset \mathcal{X}$ such that $T_{n_k}f\to 0$ for all $f\in M_0$, then there is an infinite dimensional closed subspace $M_1$ such that for every $x\in M_1\setminus \{0\}$ the orbit $\{T_{n_k}x\}$ is dense in $\mathcal{X}$.
\end{theorem}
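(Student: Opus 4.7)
The plan is to follow the classical basic-sequence perturbation strategy of Gonz\'alez--Le\'on--Montes, in its Fr\'echet-space incarnation where the continuous norm plays the role of a Banach norm. First, I would extract a normalized basic sequence $(e_n)\subset M_0$: the existence of a continuous norm is exactly what guarantees that an infinite-dimensional closed subspace of $\mathcal{X}$ contains a basic sequence, and it also allows one to control the coordinate functionals against some defining family of seminorms $(p_m)$. Since $T_{n_k}f\to 0$ pointwise on $M_0$, a diagonal argument lets us pass to a subsequence of $(n_k)$ (still labeled the same) such that $p_m(T_{n_k}e_n)$ is summable in $k$ for each fixed $n$ and $m$.

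Next, I would manufacture auxiliary small vectors to be added to the $e_n$, by a diagonal construction driven by the Hypercyclicity Criterion. Fix a countable dense subset $\{(y_j,\alpha_j)\}_{j\ge 1}$ of $Y_0\times(\C\setminus\{0\})$, enumerate pairs $(n,j)\in\N^2$, and, applying the maps $S_k$ from Theorem \ref{HPforseq} to the vectors $y_j/\alpha_j$, produce an increasing sequence of times $n_{k(n,j)}$ (compatibly with the enumeration) and vectors $v_{n,j}\in\mathcal{X}$ such that: (a) $p_m(v_{n,j})$ is as small as desired; (b) $p_m(T_{n_{k(n,j)}}v_{n,j}-y_j/\alpha_j)$ is as small as desired; and (c) for every $(n',j')\neq(n,j)$, both $p_m(T_{n_{k(n,j)}}e_{n'})$ and $p_m(T_{n_{k(n,j)}}v_{n',j'})$ are negligible compared to $|\alpha_j|$ in each seminorm below some index depending on the enumeration step. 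Define $f_n:=e_n+\sum_{j\ge 1} v_{n,j}$; the summability enforced in (a) makes the series converge in $\mathcal{X}$ and $f_n-e_n$ arbitrarily small. By the standard small-perturbation lemma for basic sequences (valid in Fr\'echet spaces with continuous norm), $(f_n)$ is basic and equivalent to $(e_n)$, so $M_1:=\overline{\operatorname{span}}\{f_n:n\ge 1\}$ is an infinite-dimensional closed subspace.

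To conclude, I would verify that every $x\in M_1\setminus\{0\}$ has dense $(T_{n_k})$-orbit. Write $x=\sum_n a_nf_n$ via the coordinate functionals of $(f_n)$ and let $N$ be the smallest index with $a_N\neq 0$. Given a target $y_{l_0}\in\mathcal{X}$ and $\varepsilon>0$, choose $j$ with $y_j$ close to $y_{l_0}$ and $\alpha_j$ close to $a_N$, which is possible by density of $\{(y_j,\alpha_j)\}$ in $\mathcal{X}\times(\C\setminus\{0\})$. Expanding $T_{n_{k(N,j)}}x$ and using the cancellations in (c) to kill the cross-terms (both the $e_{n'}$-contributions and the $v_{n',j'}$-contributions with $(n',j')\neq(N,j)$) as well as the decay of $T_{n_k}e_n$ on $M_0$, the only surviving contribution is $a_N\,T_{n_{k(N,j)}}v_{N,j}\approx a_N(y_j/\alpha_j)\approx y_{l_0}$.

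The principal obstacle, and the reason for the double-indexed construction, is exactly this final verification: since neither $N$ nor the value $a_N$ is known when the $v_{n,j}$ are built, the construction must be robust enough that \emph{every} possible first-nonzero coefficient of an element of $M_1$ is detected by some $\alpha_j$ in the enumeration. That is why the auxiliary vectors are indexed by pairs $(n,j)$, with the scalars $\alpha_j$ running over a dense set of nonzero complex numbers, and why the smallness/cancellation conditions (a)--(c) must be balanced along a carefully chosen diagonal so that, at the single time $n_{k(N,j)}$, the contributions from all other coordinates remain uniformly dominated. The continuous norm hypothesis is used repeatedly: to produce the basic sequence in $M_0$, to justify the perturbation lemma, and to formulate the summability estimates that underpin every step.
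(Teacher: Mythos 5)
First, a point of reference: the paper does not prove this statement at all. Theorem \ref{yes} is quoted as a known tool --- Petersson's Fr\'echet-space extension of the Gonz\'alez--Le\'on--Montes hypercyclic subspace criterion --- with the proof deferred to \cite{petersson}. So there is no in-paper argument to measure your proposal against; what you have written is a reconstruction of the external proof, and its overall architecture (basic sequence in $M_0$, doubly indexed small perturbations $v_{n,j}$ built from the maps $S_k$, the perturbed span $M_1$, detection of the first nonzero coefficient by a dense family of scalars $\alpha_j$) is indeed the standard one.

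There is, however, a genuine gap at the decisive step, namely your condition (c) as it applies to the basis vectors. You require $p_m(T_{n_{k(n,j)}}e_{n'})$ to be negligible for \emph{every} $(n',j')\neq(n,j)$. For the finitely many indices already processed when $k(n,j)$ is fixed this is fine (pointwise convergence on $M_0$ lets you push $k(n,j)$ far enough out), and for the later $v_{n',j'}$ it can be arranged when those vectors are built, since $T_{n_{k(n,j)}}S_{k'}(y_{j'}/\alpha_{j'})\to 0$ as $k'\to\infty$. But for basis vectors $e_{n'}$ with $n'$ beyond the current stage, $T_{n_{k(n,j)}}e_{n'}$ is a fixed vector under a fixed operator and there is no reason for it to be small: the hypothesis $T_{n_k}f\to 0$ on $M_0$ is asymptotic in $k$ for each fixed $f$ and says nothing for a fixed $k$ and varying $f$ (think of $T_{n_k}=k^{-1}I$ on $M_0$, where each single $T_{n_k}$ is bounded below on $M_0$). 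Nor can you choose the $e_{n'}$ adaptively, since a normalized vector of $M_0$ with small images under finitely many prescribed continuous operators need not exist. Consequently the tail $\sum_{n'>s}a_{n'}T_{n_{k(N,j)}}e_{n'}$ is not controlled by your conditions. The standard repair, absent from your sketch, is Banach--Steinhaus: pointwise convergence of $(T_{n_k})$ on the Fr\'echet space $M_0$ yields equicontinuity, i.e.\ estimates $p(T_{n_k}f)\leq C_p\,q_p(f)$ valid for all $k$ and all $f\in M_0$, so the tail is bounded as a whole by $C_p\,q_p\bigl(\sum_{n'>s}a_{n'}e_{n'}\bigr)$, which is small once $s$ is large because the series converges; one then chooses $j$ late enough in the enumeration. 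A second, smaller omission: the surviving cross-terms carry coefficients $a_{n'}$ that are unbounded over $M_1\setminus\{0\}$, so before the ``negligible compared to $|\alpha_j|$'' bookkeeping can close you must normalize $x$ by a seminorm controlling the coordinate functionals --- legitimate, since a nonzero scalar multiple of a vector with dense $(T_{n_k})$-orbit again has dense orbit, but it needs to be said.
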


The existence of the subspace $M_0$ in the previous Theorem can be weakened, requiring only a control of the sequence of operators in a sequence of closed subspaces of decreasing infinite dimension. This weakening was proved firstly in the Banach space setting in  \cite[Theorem 20]{leonmuller}. We will use the following extension for Fréchet spaces discovered by Q. Menet (see \cite{menet}, Theorem 1.11 and Remark 1.12).

\begin{theorem}
\label{yes2}
Let $T$ be an operator on $\mathcal{X}$  satisfying the Hypercyclicity Criterion for $(n_k)$, and let $(\rho_n)$ be an increasing sequence of seminorms defining the topology of $\mathcal{X}$. If there exists a decreasing sequence of infinite dimensional closed subspaces $(M_j)$ such that, for each $n\in\N$, we can find   $C_n>0$ and $m(n), k(n)\in\N$ so that for each $j\geq k(n)$ and $x\in M_j$,
$$
\rho_n(T^{n_j}x)\leq C_n \rho_{m(n)}(x),
$$
then $T$ has a hypercyclic subspace.
\end{theorem}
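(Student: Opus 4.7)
The plan is to follow the scheme of León--Müller \cite{leonmuller}, designed originally for Banach spaces, and adapt it to the Fréchet setting by replacing the operator-norm bound on $T^{n_j}|_{M_j}$ by the seminorm bound $\rho_n(T^{n_j}x) \leq C_n \rho_{m(n)}(x)$ supplied by the hypothesis. The ultimate goal is to construct an infinite-dimensional closed subspace $M_1$ by selecting a basic sequence $(e_\ell)$ with $e_\ell \in M_{j_\ell}$ whose closed linear span consists entirely of hypercyclic vectors for $T$.

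First I would apply the Hypercyclicity Criterion to obtain dense sets $X_0, Y_0 \subset \mathcal{X}$ and maps $S_j: Y_0 \to \mathcal{X}$ with $T^{n_j} x_0 \to 0$, $S_j y_0 \to 0$, and $T^{n_j} S_j y_0 \to y_0$. Then I would fix a sequence $(z_\ell)_{\ell \geq 1}$ enumerating the set of all complex-rational multiples of a countable dense subset of $Y_0$, with each element repeated infinitely often. Next, by induction on $\ell$, I would choose $j_1 < j_2 < \cdots$ with $j_\ell \geq k(\ell)$ and vectors $e_\ell \in M_{j_\ell}$ of the form $e_\ell = S_{j_\ell} z_\ell + \delta_\ell$, where $\delta_\ell \in M_{j_\ell}$ is a small correction chosen by a standard finite-codimensional argument inside the infinite-dimensional $M_{j_\ell}$, so that simultaneously (i) $(e_\ell)$ is a basic sequence with continuous coefficient functionals, (ii) $\rho_n(e_\ell) \leq 2^{-\ell}$ for every $n \leq \ell$, (iii) $T^{n_{j_\ell}} e_\ell$ approximates $z_\ell$ to within $2^{-\ell}$ in $\rho_\ell$, and (iv) $\rho_\ell(T^{n_{j_\ell}} e_k)$ is small for every $k < \ell$ (arrangeable using the $X_0$-vanishing property of the criterion).

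With $M_1 := \overline{\operatorname{span}\{e_\ell\}}$, basicity makes $M_1$ infinite-dimensional and closed, and every $x \in M_1$ admits a unique expansion $x = \sum c_\ell e_\ell$. The core estimate is the decomposition
$$T^{n_{j_\ell}} x = c_\ell T^{n_{j_\ell}} e_\ell + \sum_{k < \ell} c_k T^{n_{j_\ell}} e_k + T^{n_{j_\ell}}\Bigl(\sum_{k > \ell} c_k e_k\Bigr).$$
The tail $\sum_{k > \ell} c_k e_k$ belongs to $M_{j_{\ell+1}} \subseteq M_{j_\ell}$, so the hypothesis applies and yields $\rho_n(\mathrm{tail}) \leq C_n \sum_{k>\ell} |c_k| \rho_{m(n)}(e_k)$, which is arbitrarily small by (ii) and the continuity of the coefficient functionals. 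The initial sum is controlled by (iv). Hence $T^{n_{j_\ell}} x$ is close to $c_\ell z_\ell$; since the enumeration $(z_\ell)$ exhausts all rational scalings of a dense set with infinite multiplicity, for any target $y \in \mathcal{X}$ and any nonzero $x$ (so that some $c_{\ell_0} \neq 0$ with $c_\ell$ stable along a suitable subsequence by continuity of the coefficient functionals) we can find $\ell$ for which $c_\ell z_\ell$ is within any prescribed error of $y$, establishing density of the orbit of $x$ and therefore its hypercyclicity.

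The main obstacle is orchestrating the simultaneous conditions (i)--(iv) on $(e_\ell)$. Condition (iv) forces the iterates $T^{n_{j_\ell}}e_k$ for past indices $k<\ell$ to decay, which must be arranged without destroying the approximation (iii) of the current step; the basic-sequence constraint (i) must be enforced inside $M_{j_\ell}$ rather than in all of $\mathcal{X}$; and the Fréchet-space seminorm bound from the hypothesis, weaker than a Banach operator-norm bound, forces the index $j$ of the subspace $M_j$ to be rigidly coupled to the iterate $n_j$, which is why the construction uses $e_\ell \in M_{j_\ell}$ with $T^{n_{j_\ell}}$ exactly matching the subscript rather than an arbitrary power.
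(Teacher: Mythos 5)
First, a point of comparison: the paper does not prove Theorem \ref{yes2} at all; it is quoted from Menet (\cite{menet}, Theorem 1.11 and Remark 1.12), so there is no internal proof to measure your attempt against. Your sketch does follow the right family of ideas --- the Le\'on--M\"uller/Menet scheme of a perturbed basic sequence $(e_\ell)$ anchored in the subspaces $M_{j_\ell}$, with tails controlled by the hypothesis and heads controlled by the Hypercyclicity Criterion --- but it has a genuine gap at its final and most delicate step.

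You assign one target $z_\ell$ to each level $\ell$ and conclude that, for $x=\sum c_\ell e_\ell$, the controlled orbit values $T^{n_{j_\ell}}x\approx c_\ell z_\ell$ are dense because $(z_\ell)$ enumerates a dense set with infinite repetition. This cannot work: the coefficients $c_\ell$ are dictated by $x$, not by you, and nothing prevents $c_\ell=0$ for all but one $\ell$. Take $x=e_1$: by your own condition (iv) the vectors $T^{n_{j_\ell}}e_1$ tend to $0$, so the controlled part of the orbit of $e_1$ is $\{z_1+o(1)\}\cup\{o(1)\}$, nowhere near dense. The phrase ``$c_\ell$ stable along a suitable subsequence by continuity of the coefficient functionals'' does not repair this: the $c_\ell$ are values of \emph{different} functionals and need not stabilize, be bounded below, or even be nonzero along the subsequence where $z_\ell$ repeats a given value. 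The actual proofs resolve this by indexing the construction steps by \emph{pairs} (finite rational coefficient pattern, target) and building each perturbation relative to the pattern, so that an arbitrary normalized $x$ meets infinitely many steps whose pattern approximates its actual leading coefficients; this combinatorial layer is the heart of the theorem and is missing from your proposal. Two further steps also fail as written. First, $e_\ell=S_{j_\ell}z_\ell+\delta_\ell$ with $\delta_\ell\in M_{j_\ell}$ cannot itself lie in $M_{j_\ell}$ unless $S_{j_\ell}z_\ell$ does, which is not available; the standard fix is $e_\ell=u_\ell+v_\ell$ with $u_\ell\in M_{j_\ell}$ basic and $v_\ell$ a small perturbation outside $M_{j_\ell}$ whose images under the finitely many iterates $T^{n_{j_i}}$, $i\le\ell$, are made small at the moment $v_\ell$ is chosen. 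Second, the tail bound $\rho_n(\mathrm{tail})\le C_n\sum_{k>\ell}|c_k|\rho_{m(n)}(e_k)$ is not ``arbitrarily small by (ii)'': since $\rho_n(e_k)\le 2^{-k}$ forces the coefficient functionals to have norms at least $2^{k}$, the products $|c_k|\rho_{m(n)}(e_k)$ need not be summable. One must instead apply the hypothesis to the whole tail vector $w_\ell=\sum_{k>\ell}c_k e_k$, which lies in the \emph{closed} subspace $M_{j_\ell}$, and use that $\rho_{m(n)}(w_\ell)\to0$ because the series defining $x$ converges.
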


\begin{remark}
\label{seq}
Theorem \ref{yes2} remains true replacing the sequence $(T^{n_k})$ by a sequence of operators $(T_{n_k})$ (see \cite[Theorem 1.11 and Remark 1.12]{menet}).
\end{remark}

To prove that a hypercyclic operator on Banach spaces does not have a hypercyclic subspace, it suffices to inspect its essential spectrum (see \cite{gonzalezleonmontes}). For sequences of operators on Banach spaces, the result by González-León-Montes (\cite{gonzalezleonmontes}) was refined by León-Muller in  \cite{leonmuller} (Theorem 22 and Corolary 23). For sequences of operators defined on Fréchet spaces this problem is more delicate and much more complex. The following sufficient condition to prove the nonexistence of hypercyclic subspaces was discovered by Q. Menet in \cite{menet} (Theorem 2.2 and Corollary 2.3).

\begin{theorem}
\label{not}
Let $T$ be an operator on a Fr\'echet space $\mathcal{X}$ with a continuous norm. Assume that there exist a sequence of seminorms $(\rho_n)$ defining the topology of $\mathcal{X}$ and $N\geq 1$ such that for every $n\in\N$, there exist $C_n>1$, a closed subspace $N_n$ of finite codimension  such that
$$
\rho_N(T^{n}x)\geq C_n \rho_n(x) \textrm{ for $x\in N_n$.}
$$
Then $T$ has no hypercyclic subspace.
\end{theorem}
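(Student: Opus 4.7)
My plan is to argue by contradiction. Suppose $T$ possesses an infinite-dimensional closed subspace $M\subset\mathcal{X}$ all of whose nonzero vectors are hypercyclic for $T$. The first reduction is to arrange that $\rho_N$ is itself a norm. Because $\mathcal{X}$ carries a continuous norm and the seminorms $(\rho_n)$ may be taken increasing, after replacing $\rho_N$ by $\max(\rho_N,\rho_{N'})$ for an index $N'$ at which $\rho_{N'}$ is already a norm (an operation that only strengthens the hypothesis) I may assume $\rho_N$ is a norm. For every $n\geq N$ and every $x\in N_n$ the monotonicity of the seminorms then produces
\[
\rho_N(T^n x)\;\geq\; C_n\,\rho_n(x)\;\geq\; C_n\,\rho_N(x),
\]
which is the only form of the hypothesis I shall use.

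Next I would build a block-type basic sequence inside $M$. Since $N_n$ has finite codimension in $\mathcal{X}$, the intersection $M\cap N_n$ has finite codimension in $M$ and is therefore still infinite-dimensional. I inductively pick a rapidly increasing sequence $n_k\to\infty$ and vectors $x_k\in M\cap N_{n_k}$, normalized so that $\rho_N(x_k)=1$, enjoying three simultaneous properties: (i) $(x_k)$ is a basic sequence for $\rho_N$, so $\rho_N\bigl(\sum_j \alpha_j x_j\bigr)\geq c\,|\alpha_k|$ for some absolute $c>0$; (ii) $\rho_m(x_k)$ is so small that $\sum_k \alpha_k x_k$ converges in every seminorm $\rho_m$ whenever $(\alpha_k)$ is bounded; and (iii) the continuity constants of the earlier iterates $T^{n_j}$, $j\leq k$, acting on $x_k$ are dominated by the factors $C_{n_j}$ already chosen. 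Each individual constraint at stage $k$ is finite-codimensional in nature, hence compatible with $\dim(M\cap N_{n_k})=\infty$, and all three can be arranged together by careful bookkeeping.

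Form $x=\sum_k \alpha_k x_k\in M\setminus\{0\}$ for a slowly decaying positive sequence $(\alpha_k)$. Splitting
\[
T^{n_k}x\;=\;\alpha_k T^{n_k}x_k\;+\;\sum_{j<k}\alpha_j T^{n_k}x_j\;+\;\sum_{j>k}\alpha_j T^{n_k}x_j,
\]
the first term has $\rho_N$-norm at least $C_{n_k}|\alpha_k|$ by the key inequality; the tail $j>k$ is controlled by the smallness built into $x_j$ via (ii) together with continuity of $T^{n_k}$ against a higher seminorm; and the head $j<k$ is swallowed because the iterates $T^{n_k}x_j$ were made negligible at stage $k$ through (iii). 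A diagonal choice of $(\alpha_k)$ then forces $\rho_N(T^{n_k}x)\to\infty$, so the orbit of $x$ never returns to a neighbourhood of $0$, contradicting the hypercyclicity of~$x$.

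The main obstacle is the control of the head term $\sum_{j<k}\alpha_j T^{n_k}x_j$: the hypothesis supplies only lower estimates on $T^n$ in a very specific direction, whereas an effective upper estimate on $\rho_N(T^{n_k}x_j)$ for $j<k$ must be extracted from continuity of $T^{n_k}$ against some higher seminorm, and then absorbed into the smallness of $x_j$ measured in that higher seminorm. Balancing the seminorm indices, the finite codimensions of the $N_n$, the continuity constants of the iterates $T^{n_k}$, and the decay of $(\alpha_k)$ is precisely the technical core of Menet's Fréchet-space refinement of the Banach-space arguments of \cite{gonzalezleonmontes} and \cite{leonmuller}, and the existence of a continuous norm on $\mathcal{X}$ is exactly what makes this balancing feasible.
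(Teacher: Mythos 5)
First, a point of comparison: the paper does not actually prove Theorem~\ref{not}; it is quoted from Menet \cite{menet} (Theorem 2.2 and Corollary 2.3) as a preliminary tool. The only place the authors carry out an argument of this type in full is the closely analogous Theorem~\ref{general}, and that proof is the right benchmark for your sketch.

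Your general strategy (block construction inside $M$, lower bound from the hypothesis on the newest block, absorption of head and tail) is the correct one, but there is a decisive logical gap at the end. You arrange $\rho_N(T^{n_k}x)\to\infty$ only along the sparse subsequence $(n_k)$ that your bookkeeping forces you to choose, and then conclude that ``the orbit never returns to a neighbourhood of $0$''. Divergence along a subsequence says nothing about the iterates $T^nx$ for $n\notin\{n_k\}$, and those alone could make the orbit dense; to contradict hypercyclicity you need a single $\delta>0$ with $\rho_N(T^nx)\geq\delta$ for all but finitely many $n$. This is why the construction must, for \emph{every} $n$, exhibit a block lying in $N_n$ --- for instance by placing $x_k$ in the still finite-codimensional intersection $M\cap\bigcap_{n\leq m_k}N_n$ so that for each $n$ the whole tail $\sum_{j\geq k(n)}\alpha_jx_j$ sits in $N_n$ --- rather than putting $x_k$ only in the single subspace $N_{n_k}$.

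The second gap is the one you flag yourself: the head term $\sum_{j<k}\alpha_jT^{n_k}x_j$. Your condition (iii) controls early iterates applied to the late block, which is the tail, not the head; and there is no way to make $T^{n_k}x_j$ ``negligible'' for $j<k$, since $x_j$ is fixed before $n_k$ is chosen and a high iterate of a fixed nonzero vector need not be small. The standard resolution --- used by Menet and by the authors in Theorem~\ref{general} --- is M\"uller's perturbation lemma (Lemma~\ref{mullertric} here): choosing the new block so that its relevant iterates lie in the finite-codimensional subspace $H$ attached to $F=\mathrm{span}\{T^nx_j : j<k,\ n\leq m_k\}$ gives $\rho_N(a+b)\geq\rho_N(b)/(2+\varepsilon)$ for $a\in F$, $b\in H$; the head is not made small, it is merely prevented from cancelling the new block. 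Without this device (or an equivalent one) the central estimate of your argument does not close.
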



\section{Hypercyclic subspaces for extended eigenoperators of $D$}
\label{secciontres}

We begin  by describing some results obtained in \cite{jfa} where it was characterized when  an extended $\lambda$-eigenoperator  of $D$ is hypercyclic. First of all, the extended $\lambda$-eigenoperators of $D$ were described as follows:

\begin{proposition}
\label{factorizacion}
\emph{(\cite[Proposition 2.3]{jfa})}
Let $L$ be an operator on $\HC$. 
Then $DL=\lambda LD$ for some $0\neq\lambda\in\C$ if an only if
$L=R_\lambda\phi(D)$, 
with $R_\lambda f(z)=f(\lambda z)$ for $z\in \mathbb{C}$ and $\phi$ an entire function of exponential type.
\end{proposition}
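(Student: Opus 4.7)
The plan is to verify the "if" direction by direct computation and prove the "only if" direction by exploiting the eigenfunctions of $D$, namely the exponentials $e_a(z):=e^{az}$.

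For sufficiency, assume $L=R_\lambda\phi(D)$. Because $\phi$ has exponential type, the series $\phi(D)=\sum_{n\geq 0}\phi^{(n)}(0)D^n/n!$ defines a continuous operator on $\HC$ (combine the bound $|\phi^{(n)}(0)/n!|\le A B^n/n!$ with Cauchy's estimates for derivatives on compact discs). Since $\phi(D)$ commutes with $D$ and the chain rule gives $DR_\lambda=\lambda R_\lambda D$, one gets
$$
DL=DR_\lambda\phi(D)=\lambda R_\lambda D\phi(D)=\lambda R_\lambda\phi(D)D=\lambda LD.
$$

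For necessity, assume $DL=\lambda LD$ with $\lambda\neq 0$. Applying the commutation relation to $e_a$ (on which $D$ acts as the scalar $a$) gives $D(Le_a)=\lambda a\,(Le_a)$, so $Le_a$ lies in the one-dimensional kernel of $D-\lambda a I$, which is spanned by $e_{\lambda a}$. Hence there is a unique scalar $\phi(a)$ with $Le_a=\phi(a)e_{\lambda a}$, and evaluating at $z=0$ yields the explicit formula $\phi(a)=(Le_a)(0)$. The map $a\mapsto e_a$ from $\C$ into $\HC$ is holomorphic (its expansion $\sum_n a^n z^n/n!$ converges in $\HC$), so $a\mapsto Le_a$ is holomorphic in $\HC$, and composing with the continuous point-evaluation at $0$ shows that $\phi$ is entire. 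For the exponential-type estimate, continuity of $L$ together with continuity of the evaluation at $0$ furnishes $R>0$ and $C>0$ with $|(Lf)(0)|\le C\sup_{|z|\le R}|f(z)|$; applied to $f=e_a$ this yields $|\phi(a)|\le C e^{R|a|}$.

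It remains to check $L=R_\lambda\phi(D)$. Both operators are continuous on $\HC$, and on the exponential $e_a$ they agree, since $R_\lambda\phi(D)e_a=R_\lambda(\phi(a)e_a)=\phi(a)e_{\lambda a}=Le_a$. Therefore they agree on the linear span of $\{e_a:a\in\C\}$, which is dense in $\HC$ by a standard Hahn-Banach/Fourier-Borel argument (any continuous linear functional on $\HC$ vanishing on every $e_a$ has an identically zero Fourier-Borel transform, hence is zero). By continuity the two operators coincide on all of $\HC$.

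The only delicate point I expect is justifying that $\phi(D)$ is well-defined and continuous for every entire $\phi$ of exponential type, and the density of the exponentials in $\HC$; both are classical and can be invoked as standard facts rather than proved from scratch. The rest of the argument is a clean eigenfunction computation driven entirely by the $\lambda$-commutation with $D$.
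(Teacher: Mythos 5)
The paper does not prove this proposition; it is imported verbatim from \cite[Proposition 2.3]{jfa}, so there is no internal proof to compare against. Your argument is correct and is the standard one: the identity $DR_\lambda=\lambda R_\lambda D$ handles sufficiency, and for necessity the relation $D(Le_a)=\lambda a\,Le_a$ forces $Le_a=\phi(a)e_{\lambda a}$ with $\phi(a)=(Le_a)(0)$, after which holomorphy of $a\mapsto e_a$ gives that $\phi$ is entire, the continuity estimate $|(Lf)(0)|\le C\sup_{|z|\le R}|f(z)|$ gives the exponential type, and density of $\operatorname{span}\{e_a\}$ plus continuity of both operators yields $L=R_\lambda\phi(D)$. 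The two facts you invoke as classical (continuity of $\phi(D)$ for $\phi$ of exponential type, via Cauchy estimates, and density of the exponentials via the injectivity of the Fourier--Borel transform) are indeed standard, so the proof is complete as written.
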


From Proposition \ref{factorizacion}, it was derived  that if $\phi$ has no zeros, then the operator $R_\lambda \phi(D)$ is a multiple of a composition operator induced by an affine endomorphisms: $C_{\lambda,b}f(z)=f(\lambda z+b)$. The  main result in \cite{jfa} characterizes when $R_\lambda \phi(D)$ is hypercyclic:

\begin{theorem}
\label{extended}
\emph{(\cite{jfa})}
For an operator $L$ on $\HC$ satisfying $DL=\lambda LD$, $\lambda\neq 1$, the following conditions are equivalent:
\begin{enumerate}
\item $L$ is hypercyclic.
\item $L$ satisfies the Hypercyclicity Criterion.
\item $|\lambda|\geq 1$ and $L$ is not a multiple of the  operator $C_{\lambda,b}f(z)=f(\lambda z+b)$.
\end{enumerate}
\end{theorem}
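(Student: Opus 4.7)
The plan is to start from an algebraic normal form for the iterates of $L$. From $DR_\lambda=\lambda R_\lambda D$ one derives $\phi(D)R_\lambda=R_\lambda\phi(\lambda D)$, and iterating yields
$$
L^n = R_{\lambda^n}\,\Phi_n(D),\qquad \Phi_n(w):=\prod_{k=0}^{n-1}\phi(\lambda^k w),
$$
so in particular $L^n e^{\mu z}=\Phi_n(\mu)\,e^{\lambda^n\mu z}$. The implication (2)$\Rightarrow$(1) is immediate from Theorem \ref{HPforseq} applied to $T_n=L^n$, so the work splits into (3)$\Rightarrow$(2) and (1)$\Rightarrow$(3).

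For (3)$\Rightarrow$(2), assume $|\lambda|\geq 1$ and fix a zero $a$ of $\phi$. For every $\mu\in A:=\{a/\lambda^k:k\geq 0\}$ the factor $\phi(\lambda^k\mu)$ vanishes, hence $L^n e^{\mu z}=0$ for $n$ large, making $X_0:=\mathrm{span}\{e^{\mu z}:\mu\in A\}$ the natural candidate for the decaying dense set. When $|\lambda|>1$, $A$ accumulates at $0$ and $X_0$ is dense in $\HC$; when $|\lambda|=1$ and $\lambda$ is not a root of unity, $A$ accumulates on the circle $|w|=|a|$ and the same conclusion holds; the remaining root-of-unity subcase I would handle by passing to $L^p=\Psi(D)$ with $\Psi(w)=\prod_{j=0}^{p-1}\phi(\lambda^j w)$, a non-scalar operator commuting with $D$ that satisfies the Hypercyclicity Criterion by Godefroy--Shapiro. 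For the right-inverse family $S_n$ I would take $Y_0$ to be the polynomials, formally invert $\phi(D)$ on polynomials (triangular with diagonal $\phi(0)$; if $\phi(0)=0$, peel off the factor $D^m$ and invert by antidifferentiation), and compose with $R_{\lambda^{-1}}^n$. The successive integrations produce factorial decay which, combined with $|\lambda|\geq 1$, drives $S_n p\to 0$ while $L^n S_n p = p$ by construction, verifying Theorem \ref{HPforseq}.

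For (1)$\Rightarrow$(3) I argue the contrapositive in two independent cases. First, if $L=\alpha C_{\lambda,b}$ with $\lambda\neq 1$, the affine map $z\mapsto\lambda z+b$ has fixed point $z_0=b/(1-\lambda)$, and $L^n f(z_0)=\alpha^n f(z_0)$; the one-dimensional trace $\{L^n f(z_0)\}$ is never dense in $\C$, so no orbit of $L$ can be dense in $\HC$. Second, assume $|\lambda|<1$. Cauchy estimates combined with the exponential-type bound $|\phi(w)|\leq Ae^{B|w|}$ give $\rho_r(\phi(D)f)\leq C\rho_{r+B+1}(f)$; iterating $L^n=R_{\lambda^n}\Phi_n(D)$ along the contraction $R\mapsto |\lambda|R+B+1$, whose fixed point is $R^*=(B+1)/(1-|\lambda|)$, produces a uniform control of the form $\rho_R(L^n f)\leq M_n\,\rho_{R^*}(f)$. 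A refinement exploiting the decay of the Taylor coefficients of $\Phi_n$ inside disks avoiding its zeros then confines every orbit to a proper subset of $\HC$, contradicting hypercyclicity.

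The main obstacle is the $|\lambda|<1$ case of (1)$\Rightarrow$(3) when $\phi$ has zeros: since $\phi(D)$ alone is hypercyclic by Godefroy--Shapiro, the obstruction must come entirely from the contraction encoded in $R_\lambda$, and extracting a bound strong enough to exclude density (rather than the geometrically growing $M_n$ coming from naive Cauchy estimates) requires delicate quantitative analysis of the trapping of $L^n f$ near the fixed seminorm $\rho_{R^*}$. A secondary subtlety is the root-of-unity subcase of (3)$\Rightarrow$(2), where $R_{\lambda^n}$ provides no help along the subsequence $n=kp$, which I would sidestep by passing to the power $L^p=\Psi(D)$ and transporting the Hypercyclicity Criterion back to $L$.
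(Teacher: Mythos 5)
The paper does not prove this statement at all: it is imported verbatim from \cite{jfa}, so there is no in-paper argument to compare with and your proposal must stand on its own. Your normal form $L^n=R_{\lambda^n}\Phi_n(D)$ is correct, (2)$\Rightarrow$(1) is indeed immediate, and the fixed-point argument ruling out hypercyclicity of $\alpha C_{\lambda,b}$ (evaluating at $z_0=b/(1-\lambda)$ collapses the orbit to the never-dense set $\{\alpha^n f(z_0)\}$) is clean and correct. But the decisive half of (1)$\Rightarrow$(3), that $|\lambda|<1$ forbids hypercyclicity when $\phi$ has zeros, is not proved: you obtain only $\rho_R(L^nf)\leq M_n\,\rho_{R^*}(f)$ with an unspecified, generally growing $M_n$, and such a bound is compatible with density of the orbit. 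The promised ``refinement exploiting the decay of the Taylor coefficients of $\Phi_n$'' is precisely the missing content, as you acknowledge. Note the argument must be sharp enough to exclude hypercyclicity while leaving supercyclicity alive (for $|\lambda|<1$ and $\phi(0)=0$ the operator \emph{is} supercyclic, Theorem \ref{super1}), so nothing as crude as trapping the orbit in a bounded set can work in general. The estimates that do the job in \cite{jfa}, and in the analogous Theorem \ref{super2} here, come from the integral representation $L^kf(z)=\int\cdots\int f(\lambda^kz+\lambda^{k-1}w_1+\cdots+w_k)\,d\mu(w_k)\cdots d\mu(w_1)$, whose key feature for $|\lambda|<1$ is that all arguments of $f$ stay in a disc independent of $k$; your sketch contains no substitute for this.

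There are also two repairable defects in (3)$\Rightarrow$(2). First, your dense set $X_0=\mathrm{span}\{e^{\mu z}:\mu\in a/\lambda^{\mathbb{N}}\}$ degenerates to the constants when the only zero of $\phi$ is $a=0$ (e.g.\ $\phi(z)=ze^{bz}$, the Aron--Markose operators, which the theorem must cover); there you need $X_0$ to be the polynomials, which $L^n$ eventually annihilates because $\Phi_n(D)$ carries the factor $D^{nm}$. Second, your right inverses $S_n=\Phi_n(D)^{-1}R_{\lambda^{-n}}$ act on the constant function $1$ by multiplication by $\phi(0)^{-n}$, so $S_n1\to 0$ fails whenever $0<|\phi(0)|\leq 1$; the factorial decay you invoke only appears after peeling off a power of $D$, i.e.\ when $\phi(0)=0$. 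Both points are fixable (polynomials for the first; a more careful choice of $Y_0$, e.g.\ spans of exponentials $e^{\mu z}$ along which $|\Phi_n(\mu)|$ grows, for the second), but as written the Hypercyclicity Criterion is not verified even in the direction you treat as routine.
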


Now we can restate our first main result as follows:
\medskip{}{}

{\bf Main result 1. }{\it Let  $L=R_\lambda \phi(D)$ be an extended $\lambda$-eigenoperator of $D$. Then the following conditions are equivalent:
\begin{enumerate}
    \item $L$ has an hypercyclic subspace.
    \item $\phi$ has infinitely many zeros and $|\lambda|\geq 1$, or $\phi$ has a non-empty finite number of zeros and $|\lambda|=1$.
\end{enumerate}
}

The proof splits into several cases which we will study separately. Next we  consider the case when $\lambda$ is a root of the unity.

\begin{theorem}
\label{raizunidad}
Assume that $\lambda$ is a root of the unity. If $DL=\lambda LD$ and $L$ is not a multiple of $C_{\lambda,b}$ then $L$ has a hypercyclic subspace.
\end{theorem}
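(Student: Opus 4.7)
The idea is to promote the root-of-unity condition into an identity between a power of $L$ and an operator commuting with $D$, and then invoke the Petersson-Menet-Shkarin theorem. By Proposition~\ref{factorizacion}, write $L=R_\lambda\phi(D)$ with $\phi$ an entire function of exponential type. Since $L$ is not a multiple of $C_{\lambda,b}$, the function $\phi$ must have at least one zero (otherwise $\phi(z)=ce^{bz}$ and $L$ would be the scalar multiple $c\,C_{\lambda,b}$); in particular $\phi$ is non-constant.

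The first step is to record the intertwining identity
\[
\phi(D)\,R_\lambda \;=\; R_\lambda\,\phi_\lambda(D), \qquad \phi_\lambda(z):=\phi(\lambda z),
\]
obtained from the base case $DR_\lambda=\lambda R_\lambda D$ by expanding $\phi$ as a power series and using continuity in the compact-open topology. Letting $N$ be the order of $\lambda$, so $\lambda^N=1$, an induction on $N$ via this intertwining identity (and the fact that any two operators of the form $\psi(D)$ commute with each other) yields
\[
L^N \;=\; R_{\lambda^N}\,\Psi(D) \;=\; \Psi(D), \qquad \Psi(z):=\prod_{k=0}^{N-1}\phi(\lambda^k z).
\]

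Next I would observe that $\Psi$ is an entire function of exponential type which is \emph{non-constant}: at any zero $z_0$ of $\phi$ we have $\Psi(z_0)=0$, while $\Psi\not\equiv 0$ because $\HC$ is an integral domain and none of its factors is identically zero. Hence $\Psi(D)$ is a non-scalar operator commuting with $D$. By the Petersson-Menet-Shkarin theorem recalled in the introduction, $L^N=\Psi(D)$ admits a hypercyclic subspace $M$. Finally, since $\{L^{Nj}f:j\in\N\}\subseteq\{L^n f:n\in\N\}$, every non-zero vector of $M$ remains hypercyclic for $L$, so the same $M$ is a hypercyclic subspace for $L$.

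The main obstacle is essentially bookkeeping: performing the iterated rearrangement cleanly to reach the closed-form $\Psi(z)=\prod_{k=0}^{N-1}\phi(\lambda^k z)$, and verifying the non-triviality of $\Psi$. Once these are in hand, the result is an immediate consequence of Petersson-Menet-Shkarin.
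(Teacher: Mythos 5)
Your proposal is correct and follows essentially the same route as the paper: both pass to $L^{N}=\prod_{k=0}^{N-1}\phi(\lambda^{k}D)$ using $R_\lambda^{N}=I$, observe this is a non-scalar operator commuting with $D$ because $\phi$ has a zero, and apply the Petersson--Menet--Shkarin theorem, noting that a hypercyclic subspace for $L^{N}$ is one for $L$. Your version merely makes the bookkeeping (the explicit formula for $\Psi$ and the non-triviality check) more explicit than the paper does.
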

\begin{proof}
If $\lambda^{n_0}=1$ for some $n_0\in\mathbb{N}$ then  $R_{\lambda}^{n_0}=I$. Since
$L=R_\lambda\phi(D)$ is not a multiple of  $C_{\lambda,b}$, $\phi$ has some zero, $L^{n_0}$  is not a multiple of the identity and $L^{n_0}$ commutes with $D$.
Thus, since by Menet-Petterson-Shkarin's result (see \cite{menet,petersson,shkarin}) each non-scalar operator commuting with the differentiation operator has a hypercyclic subspace, we get that $L^{n_0}$ has a hypercyclic subspace. Hence $L$ has a  hypercyclic subspace as we desired.
\end{proof}

\begin{proposition}
\label{infinitosceros}
Let  $L=R_\lambda \phi(D)$ be an extended $\lambda$-eigenoperator of $D$. If $L$ is hypercyclic and $\phi$ has infinitely many zeros then $L$ has a hypercyclic subspace.
\end{proposition}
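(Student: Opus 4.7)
The plan is to exhibit directly a large closed subspace on which $L$ vanishes, and then feed this into Theorem \ref{yes}. The idea uses the fact that complex exponentials are eigenvectors of $D$, and hence of $\phi(D)$: for every $\alpha\in\mathbb{C}$, $\phi(D)e^{\alpha z}=\phi(\alpha)e^{\alpha z}$, so $Le^{\alpha z}=R_\lambda\phi(D)e^{\alpha z}=\phi(\alpha)e^{\lambda\alpha z}$. In particular, any zero of $\phi$ produces a vector in $\ker L$.

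Concretely, since $\phi$ has infinitely many zeros $\{\alpha_k\}_{k\geq 1}$, I would set
\[
M_0:=\overline{\mathrm{span}}\{e^{\alpha_k z}:k\geq 1\}\subset\HC.
\]
Because distinct exponentials $e^{\alpha_k z}$ are linearly independent, $M_0$ is infinite dimensional; and because $L$ is continuous and linear and vanishes on each generator, $L|_{M_0}\equiv 0$. In particular, $L^n f=0$ for every $f\in M_0$ and every $n\geq 1$, so $L^{n_k}f\to 0$ on $M_0$ along any subsequence we like.

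Next I would invoke the hypothesis that $L$ is hypercyclic. By Theorem \ref{extended}, this implies $|\lambda|\geq 1$ and $L$ satisfies the Hypercyclicity Criterion, so there is a subsequence $(n_k)$ for which the hypotheses of Theorem \ref{HPforseq} are met for $(L^n)$. Finally, I would observe that $\HC$ with the compact-open topology has a continuous norm (e.g.\ $f\mapsto\sup_{|z|\leq 1}|f(z)|$), so Petersson's Theorem \ref{yes} applies: the Hypercyclicity Criterion for the subsequence $(n_k)$ together with the infinite-dimensional closed subspace $M_0$ on which $L^{n_k}f\to 0$ yields an infinite-dimensional closed subspace of hypercyclic vectors for $L$.

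There is essentially no obstacle beyond verifying that the closed linear span of the $e^{\alpha_k z}$ is infinite dimensional and that $L$ really vanishes on this closed span — both of which are immediate from linear independence of exponentials and continuity of $L$. The whole conclusion hinges on the abundance of kernel vectors provided by the infinitely many zeros of $\phi$, which is precisely the structural feature that distinguishes this case from the finite-zero case treated separately.
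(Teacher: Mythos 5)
Your proposal is correct and follows essentially the same route as the paper: the paper also notes that each zero $a$ of $\phi$ gives $Le^{az}=\phi(a)e^{a\lambda z}=0$, takes the (infinite-dimensional, closed) kernel of $L$ as the subspace $M_0$ on which the orbits trivially converge to zero, and applies Theorem \ref{yes} together with the Hypercyclicity Criterion supplied by Theorem \ref{extended}. Your use of the closed span of the exponentials instead of all of $\ker L$, and your explicit remark that $\HC$ carries a continuous norm, are harmless (indeed slightly more careful) variants of the same argument.
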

\begin{proof}
By Theorem \ref{extended}, $L$ satisfies the Hypercyclicity Criterion for some subsequence $(n_k)$ of $\mathbb{N}$.
 If $a$ is a zero of $\phi$ then $Le^{az}=\varphi(a) e^{a\lambda z}=0$.
Since $\phi$ has infinitely many zeros, $\textrm{Ker}(L)$ is infinite dimensional. Since the sequence  $(L^n)$  converges trivially  to zero on $\textrm{Ker}(L)$, by Theorem \ref{yes} $L$ has a hypercyclic subspace as we desired.
\end{proof} 



Proposition \ref{infinitosceros} reduces our problem to the case that  $\phi$ is an entire function of exponential type which only has  finitely many zeros. 
By Hadamard factorization Theorem, $\phi(z)=P(z)e^{bz}$, where $P(z)$ is a polynomial. This case includes the operators $T_{\lambda,b}$ studied in \cite{aron}. 
By Theorem \ref{extended}, it is sufficient to study the case  $|\lambda|\geq 1$.
Since the existence of a hypercyclic subspace is invariant under similarity, our next result simplifies the problem.

\begin{proposition}
\label{semejanza}
Set $\lambda\neq 1$. The operator $R_\lambda P(D)e^{bD}$ is  similar to $R_\lambda P(D)$. 
\end{proposition}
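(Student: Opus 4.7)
The plan is to exhibit an explicit intertwining operator. Observe that by Taylor's formula $e^{bD}$ is precisely the translation $\tau_b f(z) = f(z+b)$, which is a topological automorphism of $\HC$ with inverse $\tau_{-b} = e^{-bD}$. Translations commute with every polynomial in $D$ (in particular with $P(D)$ and with $e^{bD}$ itself), so they pass freely through $P(D) e^{bD}$; the only nontrivial interaction is with $R_\lambda$. This suggests trying $S = e^{cD} = \tau_c$ as the similarity, with $c$ to be chosen.

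First I would record the commutation relation between $R_\lambda$ and $\tau_c$. A direct computation gives
$$
R_\lambda \tau_c f(z) = f(\lambda z + c), \qquad \tau_{c'} R_\lambda f(z) = f(\lambda z + \lambda c'),
$$
so that $\tau_c R_\lambda = R_\lambda \tau_{\lambda c}$ for every $c\in\C$.

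Next I would use this identity together with the commutativity of $\tau_c$ with $P(D)$ to compute
$$
\tau_c\,R_\lambda P(D) \;=\; R_\lambda \tau_{\lambda c} P(D) \;=\; R_\lambda P(D)\,\tau_{\lambda c},
$$
while
$$
R_\lambda P(D) e^{bD}\,\tau_c \;=\; R_\lambda P(D)\,\tau_{b+c}.
$$
Matching these two expressions amounts to requiring $\lambda c = b + c$, i.e.\ $c(\lambda-1) = b$. Since by hypothesis $\lambda \neq 1$, we may take $c = b/(\lambda-1)$.

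With this choice $S = e^{cD}$ is an invertible continuous operator on $\HC$, and by the calculation above $S\,R_\lambda P(D) = R_\lambda P(D) e^{bD}\,S$, i.e.\ $S^{-1} R_\lambda P(D) e^{bD}\, S = R_\lambda P(D)$, establishing the similarity. There is no real obstacle here; the only point worth highlighting is that the hypothesis $\lambda\neq 1$ appears exactly where it must, namely to solve the scalar equation $c(\lambda-1)=b$, and that $S$ being a translation automatically inherits invertibility and commutation with $P(D)$ and $e^{bD}$.
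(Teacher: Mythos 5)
Your proof is correct and follows essentially the same route as the paper: both conjugate by a translation $e^{cD}$, using the commutation relation $e^{cD}R_\lambda = R_\lambda e^{\lambda cD}$ and the hypothesis $\lambda\neq 1$ to solve for the translation parameter (your $c=b/(\lambda-1)$ is exactly the negative of the paper's $\alpha=b/(1-\lambda)$). The only difference is expository: you derive the parameter by matching the two sides, whereas the paper posits it at the outset.
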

\begin{proof}
Set $\alpha=\frac{b}{1-\lambda}$. 
%
%
It is easy to check that $e^{-\alpha D}R_\lambda = R_\lambda e^{-\lambda \alpha D}$. Hence 
$$
e^{\alpha D}R_\lambda P(D)e^{\alpha D}= R_\lambda e^{-\lambda \alpha D} P(D) e^{\alpha D}= R_\lambda P(D)
e^{\alpha (1-\lambda)D}= R_\lambda P(D)e^{bD}, 
$$
and the result is proved.
%
%
\end{proof}

For $f(z)=\sum_{k=0}^\infty a_k z^k \in\HC$, we define 
$\rho_M(f)=\sum_{k=0}^\infty |a_k| M^k$.
The family of seminorms $\{\rho_M : M>0\}$ induces the natural topology of $\HC$. 

The case $\lambda=1$ of the following result was proved by Menet \cite{menet}. 

\begin{theorem}
\label{modulo1}
If $L=R_\lambda P(D)$ with $|\lambda|=1$ and $P$ is a non-constant polynomial, then $L$ has an hypercyclic subspace.
\end{theorem}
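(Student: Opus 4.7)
The plan is to apply Menet's hypercyclic subspace criterion (Theorem \ref{yes2}) to $L=R_\lambda P(D)$ along a subsequence along which the Hypercyclicity Criterion holds. When $\lambda=1$ the operator reduces to $P(D)$, which commutes with $D$, and the statement is exactly Menet's theorem quoted just above; so I may assume $\lambda\neq 1$. Since $|\lambda|\geq 1$ and $P$ is non-constant, $L$ is not a multiple of any $C_{\lambda,b}$, so Theorem \ref{extended} guarantees that $L$ is hypercyclic and satisfies the Hypercyclicity Criterion along some subsequence $(n_j)$. It remains to produce a decreasing family of infinite-dimensional closed subspaces satisfying the uniform seminorm estimate demanded by Theorem \ref{yes2}.

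The analysis rests on the closed-form identity
\[
L^n = R_\lambda^n\, Q_n(D), \qquad Q_n(w) = \prod_{k=0}^{n-1} P(\lambda^k w),
\]
obtained by iterating the commutation $P(D)R_\lambda = R_\lambda P(\lambda D)$, which in turn follows from $DR_\lambda=\lambda R_\lambda D$. Two consequences of $|\lambda|=1$ will be crucial. First, $R_\lambda$ is an isometry for every seminorm $\rho_M(\sum a_k z^k) = \sum |a_k|M^k$, so that $\rho_M(L^n f)=\rho_M(Q_n(D)f)$. Second, every disk $\{|w|\leq R\}$ is invariant under $w\mapsto\lambda^k w$, hence $\sup_{|w|\leq R}|Q_n(w)|\leq B_R^n$ with $B_R=\sup_{|w|\leq R}|P(w)|$, and Cauchy's inequalities yield $|c_m|\leq B_R^n/R^m$ for the coefficients $c_m$ of $Q_n$.

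For a fast-growing sequence $(N_j)$ of positive integers to be chosen, I set
\[
M_j = \{f\in\HC : f^{(k)}(0)=0 \text{ for all } k<N_j\}.
\]
Each $M_j$ is a closed, infinite-dimensional subspace and $M_{j+1}\subset M_j$. For $f=\sum_{k\geq N_j}a_k z^k\in M_j$, a direct Taylor-coefficient computation using $k!/(k-m)!\leq k^m$ gives
\[
\rho_M\!\left(f^{(m)}\right) \leq (N_j/M)^m (M/M')^{N_j}\rho_{M'}(f)
\]
whenever $M<M'$ and $N_j\geq m/\log(M'/M)$, since the map $k\mapsto k^m(M/M')^k$ is eventually decreasing. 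Summing over $m\leq n_j\deg P$ and using $|c_m|\leq B_{N_j}^{n_j}/N_j^m$ (taking $R=N_j$ in the Cauchy bound) yields, after a geometric sum,
\[
\rho_M(L^{n_j}f) \leq \frac{M}{M-1}\, B_{N_j}^{n_j}\,(M/M')^{N_j}\,\rho_{M'}(f), \qquad f\in M_j.
\]

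The decisive step, and the main obstacle, is to engineer the growth of $(N_j)$ so that the prefactor $B_{N_j}^{n_j}(M/M')^{N_j}$ stays bounded in $j$ for every pair $M<M'$ drawn from a cofinal countable family of seminorms. Using $B_R\leq KR^{\deg P}$ for large $R$, the logarithm of the prefactor is $n_j(\log K+\deg P\cdot\log N_j) - N_j\log(M'/M)$; any sequence with $N_j/(n_j\log N_j)\to\infty$—for example $N_j=2^{n_j}$—forces this quantity to $-\infty$. A diagonal choice over the countable cofinal family of seminorms thus produces a single sequence $(N_j)$ that works simultaneously for every seminorm, verifies the hypotheses of Theorem \ref{yes2}, and yields the desired hypercyclic subspace for $L$.
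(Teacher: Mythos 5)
Your proof is correct and follows essentially the same route as the paper: both verify the Hypercyclicity Criterion via Theorem \ref{extended} and then apply Theorem \ref{yes2} to the factorization $L^{n}=R_\lambda^{n}\prod_{k}P(\lambda^{k}D)$, using the unimodularity of $\lambda$ to control the iterated polynomial and comparing polynomial growth in the degree against a geometric gain from passing from $\rho_M$ to $\rho_{2M}$ on a tail subspace. The only differences are cosmetic: you use full finite-codimensional tails $\{f:f^{(k)}(0)=0,\ k<N_j\}$ and Cauchy estimates for the coefficients of $Q_n$, where the paper uses closed spans of sparse monomials $z^{n_s}$ and a crude maximum $\widetilde{C_n}$ of the coefficients.
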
 
\begin{proof}
By Theorem \ref{extended}, $L=R_\lambda P(D)$ satisfies the Hypercyclicity Criterion for some subsequence  $(n_k)$, (moreover by \cite{freq}, the operator $L$  satisfies the Hypercyclicity Criterion for the full sequence of natural numbers).
 
Let us denote $\omega=\lambda^{-1}$ and  $P(z)= \sum_{k=0}^d b_kz^k$, for each $n\in\N$ we write 
$$
P(\omega z)P(\omega^2 z)\cdots P(\omega^{n}z) =\sum_{k=0}^{nd}b_k^{(n)} z^k
$$ 
and set $\widetilde{C_n}=\max\{|b_k^{(r)}|\,:\, 1\leq r\leq n\,,\, 1\leq k\leq dr\}$. 
Then
\begin{eqnarray*}
(R_\lambda P(D))^n (z^s)&=& R_\lambda P(D)\cdots R_{\lambda} P(D) (z^s) \\
&=& P(\omega D)\cdots P(\omega^n D) R_{\lambda}^n (z^s) \\
&=& \sum_{k=0}^{nd} b_k^{(n)} [s(s-1)\cdots (s-k+1)] \lambda^{ns} z^{s-k}.
\end{eqnarray*}

Thus, if $|z|=M\geq 1$,
$$
|(R_\lambda P(D))^n (z^s)|\leq  \sum_{k=0}^{nd} |b_k^{(n)}| [s(s-1)\cdots (s-k+1)]  
M^{s-k} 
\leq  \widetilde{C_n} nd  s^{nd} M^s.
$$

Clearly, for each $n\in\N$ we can select $M_n>0$ such that 
$\widetilde{C_n} nd x^{dn}\leq 2^x$
for $x\geq M_n$. We consider a strictly increasing sequence $(n_j)$ in $\N$ with $n_{j+1}\geq M_{n_j}$. Thus if $s\geq j+1$ then 
\begin{equation}
\label{desigualdad}
    \widetilde{C_{n_j}} n_j d n_s^{n_jd} \leq 2^{n_s}.
\end{equation}

If $f\in N_j=\overline{\textrm{linearspan}} \{z^{n_s}\,: \,s\geq j+1\}$, then $f(z)= \sum_{s=j+1}a_s z^{n_s}$ and
\begin{eqnarray*}
\rho_M(L^{n_j} f)&=& \rho_M\left(L^{n_j}\left(\sum_{s=j+1}^{\infty} a_s z^{n_s}\right) \right)\\
&=& \rho_M\left(\sum_{s=j+1}^{\infty} a_s L^{n_j}z^{n_s}\right) \\
&\leq& \sum_{s=j+1}^\infty |a_s| n_jd \widetilde{C_{n_j}} n_s^{n_j} M^{n_s}\\
&\leq &  \sum_{s=j+1} |a_s| (2M)^{n_s}=  \rho_{2M}(f), 
\end{eqnarray*}
where the last inequality follows from equation (\ref{desigualdad}).  Thus Theorem \ref{yes2} is fulfilled if we consider the sequence of seminorms $(\rho_n)$ and we take $m(n)=2n$, $C_n=k(n)=1$. Thus if $p(D)$ is not a multiple of the identity and $|\lambda|=1$ then $L=R_\lambda P(D)$ has a hypercyclic subspace, as we desired to prove.
\end{proof}

The most intriguing case is when $|\lambda|>1$ and $\phi$ has a finite number of zeros; or just  $\phi (D)=a_dD^d+\cdots a_0 I$ is a polynomial. The main idea of the proof is basically to show that the significant term of the action  of $L^n$ (on finite codimensional subspaces) is concentrate on $a_dD^d$. If we set
$$
N_n=\{f\in \HC\,:\,f(0)=f'(0)=\cdots=f^{(n)}(0)=0\},
$$
by bounding the derivative on a line, it is simple to show that there exist $c>0$ and $r>0$ such that for any $f\in N_d$
$$
\rho_r(Lf)\geq c |\lambda|^d \rho_r(f^{(d)}(\lambda z)).
$$
However this idea of boundedness is not enough for the asymptotic inequality that is needed. We stress here that the following asymptotic formula relies on taking finite codimensional subspaces $N_n$ for $n$ large enough.

\begin{lemma}
\label{iteracionpolinomio}
Suppose that $L=R_\lambda P(D)$ with $P$ a polynomial of degree $d\geq 1$. Set $M>0$. There exist  a constant $c>0$  and an increasing sequence $(m_n)\subset \mathbb{N}$ such that for each $n\in\N$ and $h\in N_{m_n}$ 
\begin{equation}
\label{formulaasintotica}
\rho_M(L^nh)\geq c^n 1\cdot |\lambda|^d\cdot |\lambda|^{2d}\cdots |\lambda|^{(n-1)d} \rho_M(h^{(nd)}(\lambda^n z)).
\end{equation}
\end{lemma}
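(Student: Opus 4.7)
The plan is to first put $L^n$ in a form that isolates its ``dominant'' part. Iterating the basic commutation $P(D)R_\lambda=R_\lambda P(\lambda D)$ (which comes from $DR_\lambda=\lambda R_\lambda D$) gives
$$L^n=R_\lambda^n\,\widetilde{Q}_n(D),\qquad \widetilde{Q}_n(D)=\prod_{k=0}^{n-1}P(\lambda^k D)=\sum_{k=0}^{nd}\beta_k^{(n)}D^k,$$
and a direct look at leading coefficients shows $\beta_{nd}^{(n)}=a_d^n\lambda^{dn(n-1)/2}$, where $a_d$ is the leading coefficient of $P$. This already accounts for the geometric product $1\cdot|\lambda|^d\cdots|\lambda|^{(n-1)d}=|\lambda|^{dn(n-1)/2}$ appearing on the right-hand side of (\ref{formulaasintotica}).

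Separating the top-degree term, I would write
$$L^nh(z)=a_d^n\lambda^{dn(n-1)/2}h^{(nd)}(\lambda^n z)+\sum_{k=0}^{nd-1}\beta_k^{(n)}h^{(k)}(\lambda^n z),$$
so the reverse triangle inequality for $\rho_M$ reduces the lemma to showing
$$\sum_{k=0}^{nd-1}|\beta_k^{(n)}|\,\rho_M\!\bigl(h^{(k)}(\lambda^n z)\bigr)\leq \tfrac12|a_d|^n|\lambda|^{dn(n-1)/2}\,\rho_M\!\bigl(h^{(nd)}(\lambda^n z)\bigr)$$
for every $h\in N_{m_n}$, once $m_n$ has been chosen suitably. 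After that, the constant comes out as $c=|a_d|/2$, since $\tfrac12|a_d|^n\geq (|a_d|/2)^n$ for $n\geq 1$.

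The key estimate is a term-by-term comparison in the Taylor expansion. Writing $h(z)=\sum_{s\geq m_n+1}a_sz^s$, the $s$-th term of $\rho_M\!\bigl(h^{(k)}(\lambda^n z)\bigr)$ equals $|a_s|\frac{s!}{(s-k)!}|\lambda|^{n(s-k)}M^{s-k}$, and its ratio to the same $s$-term on the right is
$$\frac{|\beta_k^{(n)}|}{|a_d|^n|\lambda|^{dn(n-1)/2}}\cdot\frac{(|\lambda|^n M)^{nd-k}}{(s-k)(s-k-1)\cdots(s-nd+1)},$$
which is $O(s^{-(nd-k)})$ as $s\to\infty$ (with $n$ and $k$ fixed). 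Hence, for each $n$ I can choose $m_n$ so large (and, after passing to a subsequence, strictly increasing) that each such ratio is at most $1/(2nd)$ for every $s>m_n$ and every $k<nd$. Summing in $k$ and then in $s$ gives the half-bound above and yields (\ref{formulaasintotica}) with $c=|a_d|/2$.

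The principal obstacle is the $n$-dependence of $m_n$: both the factor $|\lambda|^n$ and the ratios $|\beta_k^{(n)}|/|\beta_{nd}^{(n)}|$ may grow rapidly with $n$, so $m_n$ must grow sufficiently fast to absorb them. This is exactly why the estimate is \emph{asymptotic} and is confined to the shrinking family of finite-codimension subspaces $N_{m_n}$ instead of a single fixed subspace; it is also why the naive boundedness-on-a-line argument mentioned just before the statement is not strong enough.
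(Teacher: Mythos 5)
Your proof is correct and follows essentially the same route as the paper: both expand $L^n$ as a polynomial in $D$ composed with a power of $R_\lambda$, isolate the leading term $a_d^n\lambda^{dn(n-1)/2}D^{nd}$, and choose $m_n$ so that on $N_{m_n}$ the lower-order derivative terms are dominated coefficient by coefficient in the Taylor expansion (your explicit $1/(2nd)$ threshold fills in what the paper dispatches with ``arguing as in the first step''). The only cosmetic difference is that you commute the dilation to the left rather than the right, which is why your bookkeeping lands exactly on the factor $|\lambda|^{dn(n-1)/2}$ appearing in the statement.
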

\begin{proof}
We can assume that $L=P(D) R_{\lambda}$ and $P(D)= \sum_{k=0}^d a_kD^k$. 

For $n=1$, set $c>0$ satisfying $|a_d|>c$, and let  $h\in N_m$ with $m$ (large enough) to be determined later. We write 
$$
\rho_M(Lh)\geq \rho_M(a_dD^dR_\lambda h)-\sum_{k=0}^{d-1}|a_k|\rho_M(D^kR_\lambda h).
$$

If $h(\lambda z)=\sum_{n=m}^\infty h_nz^n$, let us check the above formula in each  $z^p$ for $p\geq m$. Since 
$$
\xi_{M,d}(p)=\frac{|a_0|M^d+|a_1|pM^{d-1}+\dots+|a_{p-1}|p\cdots (p-d+2) M }{p(p-1)\cdots (p-d+1)}\to 0
$$
as $p\to \infty$, there exists $m_1>0$ such that for $p\geq m_1$,  $|a_d|-\xi_{M,d}(p)>c$. Thus
$$
\frac{p\cdots (p-d+1)}{M^d} \left[|a_d| |h_p|M^{p}- \xi_{M,d}(p) |h_p|M^p\right] \geq c p\cdots (p-d+1)|h_p|M^{p-d},
$$
and for each $f\in N_{m_1}$ we get 
\begin{eqnarray*}
\rho_M(Lh)&\geq &\rho_M(a_dD^dR_\lambda h)-\sum_{k=0}^{d-1}|a_k|\rho_M(D^kR_\lambda h)\\
&\geq& c \rho_M(D^dh(\lambda z))=c |\lambda|^d \rho_M(h^{(d)}(\lambda z)).
\end{eqnarray*}
The proof for arbitrary $n$ is similar: if $L^n=p(D)p(\omega D) \cdots p(\omega^{n-1}D) R_{\lambda}^n$, then
\begin{eqnarray*}
\rho_M(L^nh)&=&\rho_M(p(D)p(\omega D)\cdots p(\omega^{n-1}D) R_{\lambda}^nh)\\ 
&=& 1\cdot |\omega|^d\cdots |\omega|^{d(n-1)}\rho_M(\widehat{P}(D)R_{\lambda}^nh),
\end{eqnarray*}
where $\widehat{P}(D)$ is a  polynomial whose leader term is $|a_d|^nD^{nd}$. Thus, by arguing as in the first step, there exists $m_n$ such that for any $h\in N_{m_n}$ we get
\begin{eqnarray*}
\rho_M(L^nh)&=& 1\cdot |\omega|^d\cdots |\omega|^{d(n-1)}\rho_M(\widehat{P}(D)R_{\lambda}^nh)\\
&\geq & c^n1\cdot |\omega|^d\cdots |\omega|^{d(n-1)}\rho_M(D^{nd}R_{\lambda}^nh)\\
&=& c^n1\cdot |\omega|^d \cdots  |\omega|^{d(n-1)}    |\lambda|^{nd}\cdots |\lambda|^{nd} \rho_M(h^{(nd)}(\lambda^nz))\\
&=& c^n|\lambda|^d|\lambda|^{2d} \cdots |\lambda^{nd}|\rho_M(h^{(nd)}(\lambda^nz)),
\end{eqnarray*}
and the result is proved.
\end{proof}

\begin{remark}
\label{observacionpolinomio}
For future reference, we remark that the asymptotic inequality
(\ref{formulaasintotica}) remains true for any subsequence $(r_n)$ of $(m_n)$. This fact will be crucial in the proof of Theorem \ref{general}.
\end{remark}

The following result completes the proof of   Main result 1. 
Surprisingly there are hypercyclic extended $\lambda$-eigenoperators of $D$ such that all closed subspaces of hypercyclic vectors have finite dimension.

\begin{theorem}
\label{modulomayoruno}
If $L=R_\lambda \phi(D)$ with $|\lambda|>1$ and $\phi$ has finitely many zeros, then each  closed subspace of hypercyclic vectors for $L$ is finite dimensional.
\end{theorem}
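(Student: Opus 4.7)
The plan is to verify the hypotheses of the non-existence criterion (Theorem \ref{not}), which gives precisely the absence of any infinite-dimensional closed subspace of hypercyclic vectors, i.e.\ the statement to be proved. The first step is a reduction: since $\phi$ has exponential type and only finitely many zeros, Hadamard's factorisation yields $\phi(z)=P(z)e^{bz}$ for some polynomial $P$ of degree $d\geq 1$. By Proposition \ref{semejanza} the operator $L=R_\lambda P(D)e^{bD}$ is similar to $R_\lambda P(D)$, and because having (or not having) a hypercyclic subspace is a similarity invariant, we may assume $L=R_\lambda P(D)$.

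The second step sets up the data required by Theorem \ref{not}. Take the increasing family of seminorms $\tilde\rho_n:=\rho_{M_n}$ with $M_n:=|\lambda|^{n-1}$; since $|\lambda|>1$ one has $M_n\to\infty$, so $(\tilde\rho_n)$ defines the natural topology of $\HC$. Set $N=1$, so that $\tilde\rho_N=\rho_1$. The finite-codimensional subspaces will be $N_{m_n}$, where $(m_n)$ is the increasing sequence furnished by Lemma \ref{iteracionpolinomio} (and which we are free to enlarge thanks to Remark \ref{observacionpolinomio}). For $h\in N_{m_n}$, Lemma \ref{iteracionpolinomio} with $M=1$ provides
$$\rho_1(L^nh)\ \geq\ c^n\,|\lambda|^{dn(n-1)/2}\,\rho_1\bigl(h^{(nd)}(\lambda^nz)\bigr).$$

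The third step is a coefficient-wise comparison. Writing $h=\sum_{k\geq m_n}h_kz^k$ and assuming $m_n\geq nd$,
$$\rho_1\bigl(h^{(nd)}(\lambda^nz)\bigr)\ =\ \sum_{k\geq m_n}|h_k|\,\tfrac{k!}{(k-nd)!}\,|\lambda|^{n(k-nd)},$$
whose $k$-coefficient dominates the corresponding coefficient $|h_k|\,|\lambda|^{(n-1)k}$ of $\tilde\rho_n(h)$ by at least $\tfrac{k!}{(k-nd)!}|\lambda|^{k-n^2d}\geq|\lambda|^{m_n-n^2d}$, using $\tfrac{k!}{(k-nd)!}\geq 1$ and $k\geq m_n$. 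Combining these inequalities produces $\rho_1(L^nh)\geq C_n\,\tilde\rho_n(h)$ with $C_n:=c^n|\lambda|^{m_n-dn(n+1)/2}$. Enlarging $m_n$ to grow at least quadratically (say $m_n\geq dn(n+1)/2+n|\log_{|\lambda|}c|+1$) forces $C_n>1$ for every $n$, and Theorem \ref{not} concludes.

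The main technical obstacle is the bookkeeping of three competing exponential factors: the dilation factor $|\lambda|^{dn(n-1)/2}$ produced when telescoping the powers of $R_\lambda$ through the polynomial in $D$ during the iteration of $L$, the amplification $|\lambda|^{n(k-nd)}$ introduced by the $(nd)$-th derivative composed with $R_{\lambda^n}$, and the seminorm weight $|\lambda|^{(n-1)k}$. The calibration $M_n=|\lambda|^{n-1}$ is the critical choice: it makes the $k$-dependence in the pairwise comparison cancel exactly, leaving a purely $n$-dependent residual exponent $m_n-dn(n+1)/2$ that is killed by enlarging $m_n$. Any slower growth of $M_n$ would fail to define the topology of $\HC$, while any faster growth would leave an unabsorbable $k$-dependent surplus; getting this balance right is the only substantive point.
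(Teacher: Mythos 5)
Your proof is correct and follows the same skeleton as the paper's: reduce to $L=R_\lambda P(D)$ via Hadamard factorization and Proposition \ref{semejanza}, invoke Lemma \ref{iteracionpolinomio} with $M=1$, convert the resulting lower bound on $\rho_1(L^n h)$ into an estimate against an increasing family of seminorms on the finite-codimensional subspaces $N_{m_n}$, and conclude with Theorem \ref{not}. The only genuine divergence is in the final comparison. The paper switches to the sup-norms $p_n(h)=\max_{|z|\leq |\lambda|^{n/4}}|h(z)|$ and exploits the mismatch between the telescoped dilation factor $|\lambda|^{dn(n-1)/2}$ and the loss $|\lambda|^{-dn^2/4}$ incurred in trading $h^{(nd)}$ for $h$ on the disc of radius $|\lambda|^{n/4}$; the residual exponent $dn^2/4-dn/2$ tends to $+\infty$ with the original sequence $(m_n)$, so no enlargement of $m_n$ is needed. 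You instead stay with the coefficient seminorms $\rho_{|\lambda|^{n-1}}$, for which the comparison is a clean term-by-term inequality, and you pay for the exact cancellation of the $k$-dependence by forcing $m_n$ to grow quadratically; this is legitimate, since $N_r\subset N_{m_n}$ for $r\geq m_n$ and Remark \ref{observacionpolinomio} explicitly allows passing to such a subsequence. Your route is slightly more self-contained (everything is coefficient-wise, so no Cauchy-type estimate relating $\max|h^{(nd)}|$ to $\max|h|$ is required), at the cost of importing the quadratic growth of $m_n$. One cosmetic point: you obtain only $C_n\geq|\lambda|>1$, which matches Theorem \ref{not} as stated in the paper; if one prefers $C_n\to\infty$, as the paper's computation yields and as Menet's criterion is usually phrased, adding an extra $+n$ to your lower bound on $m_n$ gives it for free.
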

\begin{proof}

We can assume that $L=P(D)R_\lambda$ with $P(z)=\sum_{k=0}^db_kz^k$  non-constant. By applying Lemma \ref{iteracionpolinomio} there exist  a constant $c>0$  and an increasing sequence $(m_n)\subset \mathbb{N}$ with $m_n\geq nd$ such that for any $h\in N_{m_n}$
\begin{equation}
\label{nuevotres}
\rho_1(L^nh)\geq c^n 1\cdot |\lambda|^d\cdot |\lambda|^{2d}\cdots |\lambda|^{(n-1)d} \rho_1(h^{nd)}(\lambda^n z)).
\end{equation}

Let us consider the seminorms $p_0(h)=\max_{|z|\leq 1} |h(z)|$, and 
$$p_n(h)=\max_{|z|\leq |\lambda|^{n/4}}|h(z)|.$$
Then $(p_n)$ is an increasing sequence of seminorms that defines the original topology of $\HC$. 
Moreover, since $m_n\geq nd$, for  $h\in N_{m_n}$ we get
\begin{eqnarray*}
\rho_{1}(h^{(nd)}(\lambda^nz)&=& \max_{|z|\leq 1} |h^{(nd)}(\lambda^nz)| \\
&=& \max_{|w|\leq |\lambda|^n } | h^{(nd)}(w)| \\
&\geq & \max_{|w|\leq |\lambda|^{n/4}} |h^{nd)}(w)|\\
&\geq& |\lambda|^{-dn^2/4}  \max_{|w|\leq |\lambda|^{n/4}}|h(w)|\\
&=& |\lambda|^{-dn^2/4}  p_n(h).
\end{eqnarray*}
 
By setting $C_n=c^n |\lambda|^{\frac{dn(n-1)}{2}}| \lambda|^{-dn^2/4} $ and using the above inequality in equation (\ref{nuevotres}), for $h\in N_{m_n}$ we obtain  
$$
p_0(L^nh)\geq c^n |\lambda|^{\frac{dn(n-1)}{2}}|\lambda|^{-n^2/4}  p_n(h) =C_n p_n(h)
$$
with $C_n\to \infty$.  Thus, by Theorem \ref{not}, $L$ has no  hypercyclic subspaces.
 \end{proof}

\section{Supercyclic extended eigenoperators of $D$}
\label{seccioncuatro}

Let $L$ be an operator on $H(\C)$ satisfying $DL=\lambda LD$ with $\lambda\neq 1$. In this section we  study when $L$ is  supercyclic. 
In this direction, it was proved in  \cite[Theorem 3.6]{bernalbonillacalderon} that no multiple of $C_{\lambda,b}$ is supercyclic for $\lambda\neq 1$. The study of hypercyclicity of the  extended $\lambda$-eigenoperators of $D$ defined by: $T_{\lambda,b}f= f'(\lambda z+b)$ were  studied in \cite{aron,ferandezhallack,pilar}. However, the supercyclicty of such operators, as far we know, has not been studied. 
Our second main result solves this question and provides a complete answer to Question 2 in the introduction. Surprisingly enough, the characterization depends of the value of $\phi$ at the origin.

Let us denote by $HC$ (and $SC$) the subset of all hypercyclic operators (supercyclic operators) defined on $\HC$.
\medskip

{\bf Main result 2. }{\it Assume that $L=R_\lambda \phi(D)$ is an extended $\lambda$-eigenoperator of $D$. The following conditions are equivalent:
\begin{enumerate}
    \item $L\in SC\setminus HC$.
    \item $|\lambda|<1$, $\phi(0)=0$.
\end{enumerate}
}

We point out  that if $L=R_\lambda \phi(D)$ with $\lambda\neq 1$ and $\phi$ has no zeros, then (see Proposition 2.3 \cite{jfa}) $L$ is a multiple of $C_{\lambda,b}$ for some $b\in \mathbb{C}$, hence by Bernal-Bonilla-Calderón \cite{bernalbonillacalderon}, $L$ is not supercyclic. According to the results in \cite{jfa}, the only extended $\lambda$-eigenoperator which are not hypercyclic and could be supercyclic are the extended $\lambda$-eigenoperators associated to $|\lambda|<1$. Thus, we can concentrate on the cases with $|\lambda|<1$ and $\phi^{-1}\{0\}\neq \emptyset$. 

The proof of the second main theorem splits into two cases. First we analyze the case $\varphi(0)=0$ and $|\lambda|<1$, which includes the  Aron-Markose operators. And then we study the case $|\lambda|<1$ and $\phi(0)\neq 0$, in which surprisingly the extended $\lambda$-eigenoperators of $D$ are not supercyclic. 

\begin{theorem}
\label{super1}
If $L=R_\lambda \phi(D)$ with $|\lambda|< 1$ and $\phi(0)=0$, then $L$ is supercyclic.
\end{theorem}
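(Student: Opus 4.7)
The plan is to verify a supercyclicity criterion for $L$ using the subspace of polynomials, exploiting two features of the hypothesis $\phi(0)=0$: first, $L$ becomes locally nilpotent on $\mathbb{C}[z]$, which makes the forward decay condition trivial; second, $L$ remains surjective on $\mathbb{C}[z]$, which lets us build an explicit right inverse for the backward convergence.

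First I would factor $\phi(z) = z^m \tilde{\phi}(z)$, where $m \geq 1$ is the order of the zero of $\phi$ at the origin and $\tilde{\phi}$ is entire of exponential type with $\tilde{\phi}(0) \neq 0$. Then
\[
L = R_\lambda \phi(D) = R_\lambda D^m \tilde{\phi}(D).
\]
Since $D^m$ lowers the degree of any polynomial by exactly $m$, while $\tilde{\phi}(D)$ and $R_\lambda$ preserve degree, the operator $L$ strictly reduces the degree on $\mathbb{C}[z]$ by $m$. In particular, $L^k p = 0$ for every $p \in \mathbb{C}[z]$ once $k > \deg p / m$; that is, $L$ is locally nilpotent on polynomials.

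Next I would build a right inverse $S:\mathbb{C}[z]\to\mathbb{C}[z]$ of $L$. Because $\tilde{\phi}(0) \neq 0$, the operator $\tilde{\phi}(D)$, viewed on $\mathbb{C}[z]$, is upper triangular with nonzero diagonal in the monomial basis; it is therefore a bijection of $\mathbb{C}[z]$, and its inverse $\tilde{\phi}(D)^{-1}$ acts on any polynomial as the finite sum $\sum_{j\ge 0} b_j D^j$ determined by the local expansion $1/\tilde{\phi}(z)=\sum_j b_j z^j$ at the origin. Let $J$ denote integration from $0$, so that $D^m J^m = I$ on $\mathbb{C}[z]$, and set
\[
S := \tilde{\phi}(D)^{-1}\, J^m\, R_{\lambda^{-1}}.
\]
A direct calculation yields $LS p = R_\lambda D^m \tilde{\phi}(D)\tilde{\phi}(D)^{-1} J^m R_{\lambda^{-1}} p = R_\lambda R_{\lambda^{-1}} p = p$ for every polynomial $p$, and therefore $L^k S^k q = q$ for every $q \in \mathbb{C}[z]$ and $k \ge 1$.

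Finally I would invoke the Supercyclicity Criterion (the supercyclic analogue of Theorem \ref{HPforseq}) with $X_0 = Y_0 = \mathbb{C}[z]$ (which is dense in $\HC$), the full subsequence $n_k = k$, and maps $S_k := S^k$. For any polynomials $p,q$ and any continuous seminorm $\rho$ on $\HC$, the product $\rho(L^k p)\cdot \rho(S^k q)$ is $0$ as soon as $k > \deg p / m$, so the mixed-decay condition holds trivially; and $L^k S^k q = q$ supplies the backward convergence. This gives supercyclicity of $L$. The only nontrivial technical point is that $S$ genuinely maps polynomials to polynomials, which is precisely where the factorisation $\phi(z)=z^m\tilde{\phi}(z)$ with $\tilde{\phi}(0)\neq 0$ is essential; once that is in place, the proof is a pure degree-counting exercise and avoids the growth estimates needed in the harder cases of the paper.
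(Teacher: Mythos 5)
Your proof is correct, and while the overall strategy coincides with the paper's (both are Kitai-type criterion arguments with $X_0=Y_0=\mathbb{C}[z]$, the factorization $\phi(z)=z^m\tilde\phi(z)$ with $\tilde\phi(0)\neq 0$, and right inverses of $L$ on polynomials), the implementation is genuinely different and in fact lighter. The paper does not invoke a Supercyclicity Criterion: it manufactures explicit normalizing scalars $\lambda_k=(\lambda^m\lambda^{2m}\cdots\lambda^{(k-1)m})^{-1}$ and applies the Hypercyclicity Criterion for sequences (Theorem \ref{HPforseq}) to $(\lambda_kL^k)$, which forces it to verify that the right inverses themselves tend to $0$; for that it diagonalizes $A_\lambda=R_\lambda\psi(D)$ on polynomials of bounded degree (eigenpolynomials $p_k$ with simple eigenvalues $\psi(0)\lambda^k$) and builds $S_k$ from powers of the Volterra operator normalized by those eigenvalues. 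You sidestep the eigenbasis, the Volterra operator and the scalars entirely: since $L^kp=0$ once $k>\deg p/m$, the product condition $\rho(L^kp)\,\rho(S^kq)\to 0$ holds no matter how fast $S^kq$ grows (and it does grow, since $R_{\lambda^{-1}}$ is expanding for $|\lambda|<1$), so only the exact identity $L^kS^kq=q$ needs checking, and your verification of it --- $\tilde\phi(D)\tilde\phi(D)^{-1}=\mathrm{Id}$ on $\mathbb{C}[z]$ because all sums are finite and the power series multiply to $1$, together with $D^mJ^m=\mathrm{Id}$ and $R_\lambda R_{\lambda^{-1}}=\mathrm{Id}$ --- is sound. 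The one point you should make explicit is a citation for the Supercyclicity Criterion in the Fr\'echet-space setting (it is not stated in the paper, which only records the Hypercyclicity Criterion for sequences); the standard versions in Bayart--Matheron or Grosse-Erdmann--Peris apply since $\HC$ is a separable Fr\'echet space with a continuous norm, and the product condition there is required for the seminorms of a fundamental increasing system, which your argument satisfies trivially because the product is eventually identically zero.
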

\begin{proof}
It is enough to find a sequence of real numbers $(\lambda_n)$ such that the sequence of operators $(\lambda_nL^n)$ satisfies the conditions of Theorem \ref{HPforseq}.

We write $\phi(z)= z^m\psi(z)$ with $\psi(0)\neq 0$, and set $A_\lambda=R_\lambda \psi(D)$, so that  $L=A_\lambda D^m$. Since $\psi(0)\neq 0$, the polynomials of degree less or equal than $n$ form an invariant subspace for $A_\lambda$. Moreover, the operator $A_\lambda$ has a triangular matrix representation with diagonal entries $\psi(0)\lambda^k$, $k=0,1,\cdots, n$; hence the eigenvalues of the matrix are simple. Let $p_k$ ($0\leq k\leq n$) be the polynomial of degree $k$ 
such that $A_\lambda p_k=\psi(0)\lambda^k p_k$ for $k\geq 0$.
Since $DR_\lambda=\lambda R_\lambda D$ we get:
\begin{eqnarray*}
L^k &=&R_\lambda D^m \psi(D)\cdots R_\lambda D^m \psi(D)  \\
&=&
\lambda^{m} \lambda^{2m}\cdots \lambda^{(k-1)m} R_\lambda \psi(D)\cdots R_\lambda \psi(D) D^{km}  \\
&=& \lambda^{m} \lambda^{2m}\cdots \lambda^{(k-1)m} A_\lambda^kD^{km}.
\end{eqnarray*}

Set $\lambda_k=(\lambda^m\cdots \lambda^{m(k-1)})^{-1}$ and let us denote $T_k=\lambda_kL^k$. 
Since $L^k p(z)=0$ for $km> \textrm{deg}(p)$, the sequence $(\lambda_kL^k)$ converges pointwise to zero on the set of polynomials $X_0=\textrm{linearspan}\{p_k(z)\,\,:\,k\geq 0\}$, which is dense in $\HC$.
Let $V$ be the complex Volterra operator defined by
$$
Vf(z)=\int_0^z f(\xi)d\xi,\quad (z\in \mathbb{C}).
$$

Since $LV^mp_k=A_\lambda p_k=\psi(0)\lambda^k p_k$, we can define
$$
S_kp_n =\frac{V^{mk}p_n}{(\psi(0)\lambda^n)^k },
$$
and we extend $S_k$ to $X_0$ by linearity. It is easy to check that $\lambda_kT_kS_k= \textrm{Id}_{X_0}$.

Finally, for $n_0$ fixed, $S_k p_{n_0}(z) \to 0$ uniformly on compact subsets; hence $S_k$ converges pointwise to zero on $X_0$. Thus, by Theorem \ref{HPforseq}, there exists $f\in \HC$ such that $\{\lambda_kL^kf\}_{k\geq 0}$ is dense in $\HC$, hence $L$ is supercyclic.
\end{proof}

The proof of the next result completes the characterization of supercyclicity for extended $\lambda$-eigenoperators of $D$.
We highlight here the difficulty of proving the non-supercyclicity of an operator.
\begin{theorem}
\label{super2}
Assume that $|\lambda|<1$ and $\phi(0)\neq 0$ then the operator $L=R_\lambda \phi(D)$ is not supercyclic.
\end{theorem}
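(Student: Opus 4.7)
The plan is to produce two continuous linear functionals $\Lambda_0,\Lambda_1\in \HC^*$ that are linearly independent eigenvectors of $L^*$ whose eigenvalues have distinct moduli, and then derive a contradiction by projecting the putative supercyclic orbit onto $\C^2$. To identify $L^*$, I will invoke the Fourier--Borel transform $\mathcal{F}:\HC^*\to \text{Exp}(\C)$ defined by $\mathcal{F}(\Lambda)(w)=\Lambda(e^{wz})$, which is a linear bijection onto the space of entire functions of exponential type. Since $L(e^{wz})=R_\lambda\phi(D)e^{wz}=\phi(w)e^{\lambda wz}$, a direct computation shows that under $\mathcal{F}$ the adjoint $L^*$ corresponds to the operator $\widetilde L$ acting by $(\widetilde L g)(w)=\phi(w)\,g(\lambda w)$.

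Writing $\alpha=\phi(0)\neq 0$, the crucial construction is
\[
h(w)=\prod_{k=0}^\infty \frac{\phi(\lambda^k w)}{\alpha}.
\]
The key technical step is to show that $h$ is a non-trivial entire function of exponential type. Since $\phi$ is entire of exponential type, Cauchy estimates give $|\phi(z)-\alpha|\leq A|z|\,e^{B|z|}$ for some constants $A,B>0$, so $|\phi(\lambda^k w)/\alpha-1|\leq (A/|\alpha|)\,|\lambda|^k|w|\,e^{B|\lambda|^k|w|}$. For $w$ in a compact set I split the indices into the ``tail'' where $|\lambda|^k|w|$ is small (which is summable and contributes $O(|w|)$ to $\log|h(w)|$) and the remaining ``early'' indices, of cardinality $O(\log|w|)$, each contributing at most $O(|w|)$. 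This yields local uniform convergence of the product and the bound $\log|h(w)|\leq C|w|+O(\log|w|)$, so $h\in\text{Exp}(\C)$. Moreover $h(0)=1$. The identity $h(\lambda w)=\prod_{k\geq 1}\phi(\lambda^k w)/\alpha$ then gives immediately $\widetilde L h=\alpha\,h$, and multiplying by $w^n$ I obtain $\widetilde L(w^n h)=\alpha\lambda^n(w^n h)$ for every $n\geq 0$.

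Pulling back, I obtain functionals $\Lambda_n:=\mathcal{F}^{-1}(w^n h)\in \HC^*$ satisfying $L^*\Lambda_n=\alpha\lambda^n\,\Lambda_n$; they are linearly independent because $h(0)=1\neq 0$ forces $w^n h$ to have distinct orders of vanishing at the origin. I then single out $\Lambda_0,\Lambda_1$ with eigenvalues $\alpha$ and $\alpha\lambda$, noting $|\alpha|\neq|\alpha\lambda|$ since $0<|\lambda|<1$. Suppose $f$ were supercyclic for $L$; then $\{cL^kf:c\in\C,\,k\geq 0\}$ is dense in $\HC$, so its image under the continuous surjective map $\Phi=(\Lambda_0,\Lambda_1):\HC\to\C^2$ must be dense in $\C^2$. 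But
\[
\Phi(cL^kf)=c\,\alpha^k\bigl(\Lambda_0(f),\,\lambda^k\Lambda_1(f)\bigr),
\]
so $\Phi(\mathrm{Orb}(f))$ is a countable union of complex lines through the origin whose projective directions $[\Lambda_0(f):\lambda^k\Lambda_1(f)]\in \mathbb{P}^1(\C)$ accumulate only at the single point $[1:0]$ (provided $\Lambda_0(f)\neq 0$ and $\Lambda_1(f)\neq 0$; otherwise the orbit lies inside the kernel of a non-zero functional and density fails trivially). Such a union cannot be dense in $\C^2$, a contradiction.

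The main obstacle I anticipate is the verification that the infinite product defining $h$ produces an entire function of \emph{exponential} type: the naive pointwise bound $|\phi(\lambda^k w)/\alpha|\leq A/|\alpha|$ is useless because $\prod_k(A/|\alpha|)$ typically diverges, so one must carefully separate the logarithmically many ``early'' indices from the geometrically convergent tail, even when $\phi$ has infinitely many zeros (whose preimages $a_j/\lambda^k$ under the dilations $w\mapsto\lambda^k w$ escape to infinity because $\phi(0)\neq 0$ forces $\inf_j|a_j|>0$). Once $h$ is in hand, the eigenvalue identification and the angle-type obstruction on $\C^2$ are routine. This approach also clarifies the dichotomy with Theorem \ref{super1}: when $\phi(0)=0$ the defining product is ill-posed and $\widetilde L$ has only $0$ as an eigenvalue, which is precisely why no spectral obstruction prevents supercyclicity in that case.
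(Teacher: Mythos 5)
Your proof is correct, but it takes a genuinely different route from the paper's. The paper argues directly on the orbit: assuming $\lambda_{n_k}L^{n_k}f\to e^z$, it applies $D^m$ (which fixes $e^z$) and shows that the ratio $(D^mL^{k}f)(z_0)/(L^{k}f)(0)$ tends to $0$ for a suitable $m$, which contradicts its convergence to $e^{z_0}$. The numerator is bounded above by a constant times $|\lambda|^{mk}\|\mu\|^k$ via the integral representation of $L^k$ by a compactly supported Borel measure $\mu$ taken from \cite{jfa}, while the denominator is eventually bounded below by $1/2$, by splitting $f$ into its Taylor polynomial of degree $m_0$ (on which $L$ acts triangularly with diagonal $\phi(0)\lambda^j$, so the constant term survives) plus a tail whose contribution at $0$ is uniformly small. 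You instead manufacture explicit eigenvectors of the adjoint: under the Fourier--Borel transform $L^*$ becomes $g\mapsto \phi(w)g(\lambda w)$ on $\mathrm{Exp}(\C)$, the infinite product $h(w)=\prod_k\phi(\lambda^k w)/\phi(0)$ satisfies $\widetilde{L}h=\phi(0)h$ and $\widetilde{L}(w^nh)=\phi(0)\lambda^n w^n h$, and two linearly independent adjoint eigenvectors with eigenvalues of distinct moduli rule out supercyclicity by the standard countable-union-of-lines obstruction in $\C^2$ (you correctly dispose of the degenerate cases $\Lambda_i(f)=0$). Your approach is more conceptual: it isolates the spectral reason for non-supercyclicity and explains the dichotomy with Theorem \ref{super1}, since for $\phi(0)=0$ the product degenerates and the point spectrum of $L^*$ collapses. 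The cost is the Fourier--Borel machinery and a delicate growth estimate; the paper's argument is longer but stays inside $\HC$ and recycles the measure representation already established in \cite{jfa}.

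One step in your write-up needs tightening. The accounting for the ``early'' indices (those with $|\lambda|^k|w|\geq 1$, of cardinality $O(\log|w|)$), each said to contribute ``at most $O(|w|)$,'' would literally give $\log|h(w)|=O(|w|\log|w|)$, which is \emph{not} of exponential type, and exponential type (not merely order one) is exactly what you need for $w^nh$ to lie in the range of the Fourier--Borel transform. The estimate does go through, but only because $\log\bigl(1+A|\lambda|^k|w|e^{B|\lambda|^k|w|}\bigr)\leq B|\lambda|^k|w|+\log(|\lambda|^k|w|)+O(1)$ and the linear parts sum \emph{geometrically} over the early indices to $O(|w|)$, leaving only an $O((\log|w|)^2)$ remainder. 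Make that summation explicit and the proof is complete.
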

\begin{proof}
Assume that $f\in \HC$ is supercyclic for $L$.
Thus, given
$g(z)=e^z$, there exists an increasing subsequence of natural numbers $\{n_k\}$ and a sequence of scalars $\{\lambda_{n_k}\}\subset \mathbb{C}\setminus \{0\}$ such that
$$
\lambda_{n_k} (L^{n_k}f)(z)\to e^z \quad \textrm{as}\quad n_k\to \infty
$$
uniformly on compact subsets of $\mathbb{C}$. Moreover, since $D^me^z=e^z$, for each $m\in \mathbb{N}$
$$
\lambda_{n_k} D^m (L^{n_k}f)(z)\to e^z \quad \textrm{as}\quad n_k\to \infty
$$
uniformly on compact subsets of $\mathbb{C}$. In particular,
\begin{equation}
\label{limite1}
\lambda_{n_k} (D^m L^{n_k}f)(z_0)\to e^{z_0} \quad \textrm{and}\quad 
\lambda_{n_k}  (L^{n_k}f)(0)\to 1 \quad \textrm{as}\quad n_k\to \infty.
\end{equation}

From (\ref{limite1}) we get that 
\begin{equation}
\label{cociente}
    \frac{(D^m L^{n_k}f)(z_0)}{(L^{n_k}f)(0)}\to e^{z_0}\quad \textrm{as}\quad  \textrm{as}\quad n_k\to\infty.
\end{equation}
This unusual idea of applying $D^m$ to the previous sequence is crucial and central in the proof. It really simplifies everything.
We will show that we can choose $m\in \mathbb{N}$ such that
\begin{equation}
\label{obje}
    \frac{(D^m L^{k}f)(z_0)}{(L^{k}f)(0)}\rightarrow 0\quad k\to\infty,
\end{equation}
which contradicts (\ref{cociente}). 

To show (\ref{obje}) we will use two different integral representations of the operator $L$.  It was shown in \cite[Theorem 3.3]{jfa} that there exists a Borel measure $\mu$ with compact support in $\mathbb{C}$ such that
$$
(L^kf)(z)=\int\cdots\int f\left(\lambda^kz+\lambda^{k-1}w_1+\cdots+w_k\right)d\mu(w_k)\cdots d\mu(w_1).
$$
Now, since the support of $\mu$ is contained in some disc $D(0,R)$ and 
$$
|\lambda^k z+\lambda^{k-1}w_1+ \cdots+w_k|\leq M(|z|)= |\lambda|^k|z|+ \frac{1-|\lambda|^k}{1-|\lambda|}R
$$
for $|z|<r$, we get that each element in the argument of $f$ lies in the disk $D(0,M(r))$. Therefore, for $f\in \HC$, if $|z|\leq r$ then 
$$
|L^kf(z)|\leq \sup_{|z'|=M(r)} |f(z')| \|\mu\|^k,
$$
where $\|\mu\|$ denotes the total variation of the measure $\mu$.
On the other hand, since $DL=\lambda LD$, for $|z|\leq r$ we get 
\begin{equation}
    \label{superiorr}
    |(D^mL^kf)(z)|=|\lambda|^{mk} |(L^k)D^mf(z)|\leq |\lambda|^{mk} \|\mu\|^k \max_{|z|=M(r)} |D^mf(z)|.
\end{equation}

Next we obtain lower estimates of $|(L^{k}f)(0)|$. Indeed, let us observe that if $\omega=\lambda^{-1}$ then
\begin{eqnarray}
\label{iterada}
L^k&=&R_\lambda \phi(D)R_\lambda \phi(D)\cdots R_\lambda\phi(D)\\
&=& \phi(\omega D)\phi(\omega^2D)\cdots\phi(\omega^k D) R_\lambda^k.
\end{eqnarray}

We consider the sequence of entire functions $\Phi_k(z)=\phi(\omega z)\cdots \phi(\omega^kz)$ with series expansions  
\begin{equation}
    \label{serie}
    \Phi_k(z)=\sum_{m=0}^\infty \frac{a_m^{(k)}}{m!} z^m,
\end{equation}
where $a_m^{(k)}= \Phi^{(m)}(0)$ can be obtained by Leibnitz's rule. Therefore 
\begin{eqnarray*}
   \left| \Phi_k^{(m)}(0)\right| &=& 
\left|[\phi(\omega z),\ldots, \phi(\omega^k z)]^{(m)}(0) \right|\\
&=& \left|\sum_{h_1+\ldots+h_k=m} \binom{m}{h_1,\ldots,h_k} \prod_{t=1}^{k} (\phi(\omega^{t} z))^{(h_t)}(0)\right|\\
&\leq & \sum_{h_1+\ldots+h_k=m}
\binom{m}{h_1,\ldots,h_k} \prod_{t=1}^{k} |\omega^{th_t}(\phi^{(h_t)}(0))|\\
&\leq &{\rm C}^m (|\omega|+\ldots+|\omega|^k)^{m},
\end{eqnarray*}
with $C=\sup_{t\in\mathbb{N}\cup\{0\}}|\phi^{(t)}(0)|$. Thus, for each  $f\in \HC$  
$$
L^kf(z)= \sum_{m=0}^\infty \frac{a_m^{(k)}}{{m!}} D^m R_{\lambda}^kf(z) =  \sum_{m=0}^\infty \frac{a_m^{(k)}}{{m!}} \lambda^{km} f^{(m)}(\lambda^k z).
$$

Using the estimates of $a_m^{(k)}$ previously obtained we get:
$$
|L^kf(0)|= \sum_{m=0}^\infty  \frac{|a_m^{(k)}|}{{m!}}| \lambda^{km}| |f^{(m)}(0)|
\leq  \sum_{m=0}^{\infty}\frac{K^mC^m}{m!} |f^{(m)}(0)|,
$$
with $K$ an upper bound of the bounded sequence $(|\lambda|^{k} [|\omega|+\cdots+|\omega|^{k}])_k$.

Thus, if $f=\sum_{m=0}^\infty \frac{f^{(m)}(0)}{m!} z^m$ is a supercyclic vector for $L$, we decompose $f(z)=(P_{m_0}f)(z)+(T_{m_0}f)(z)$, here $(P_{m_0}f)(z)$ is the Taylor polynomial of degree $m_0$ and $(T_{m_0}f)(z)$ is the tail of the series.
By the above estimates we known that there exist $m_0$ which depends only on $f$ such that
\begin{equation}
\label{restor}
|L^k(T_{m_0}f)(0)|\leq 1/4
\end{equation}
uniformly on $k$.

On the other hand,  we consider $L$ acting on the invariant subspace $\{1,z,\cdots,z^{m_0}\}$. On such invariant subspace $L$ is triangular with diagonal entries $\phi(0)\lambda^k$, $k=0,1,\cdots,m_0$. Since supercyclicity is invariant by multiplication by nonzero scalars, we can suppose without loss that $\phi(0)=1$.
Since the eigenvalues are different, $\{1,p_1(z),\cdots, p_{m_0}(z)\}$ is a basis of the space $\mathcal{P}_{m_0}[z] $ of polynomials of degree less or equal than $m_0$.
Therefore, there exist scalars $c_0,c_1,\cdots,c_{m_0}$ such that $(P_{m_0}f)(z)=c_0 1+c_1 p_1(z)+\cdots+c_{m_0} p_{m_0}(z)$.  Moreover, if $L$ were supercyclic, then there would be a dense subset of supercyclic vectors. Thus, 
we can suppose without loss that $c_0\neq 0$. Moreover since a non-zero multiple of a supercyclic vector is also supercyclic, we can suppose that $c_0=1$.
Therefore
$$
L^k(P_{m_0}f)(z)=1+c_1\lambda^k p_1(z)+\cdots+ c_{m_0}\lambda^{m_0k} p_{m_0}(z),
$$
which implies that there exist $k_0$ such that fo any $k\geq k_0$,
$$|c_1\lambda^k p_1(0)+\cdots+ c_{m_0}\lambda^{m_0k} p_{m_0}(0)|<1/4.$$ 
That is:
\begin{equation}
\label{cabezar}
|L^{k}P_{m_0}f(0)|\geq 3/4
\end{equation}
for all $k\geq k_0$. Thus, by selecting $m$ such that 
$|\lambda|^m<\|\mu\|$ and using the estimates (\ref{superiorr}), (\ref{restor}) and (\ref{cabezar}), we get:

\begin{eqnarray*}
\left|     \frac{(D^m L^{k}f)(z_0)}{(L^{k}f)(0)} \right| &=&
\left|   \frac{(D^m L^{k}f)(z_0)}{| L^k P_{m_0}f (0)+L^kT_{m_0}f(0)| }\right| \\
&\leq & 
\frac{|\lambda|^{mk} \|\mu\|^k \max_{|z|=M(r)} |D^mf(z)|}{3/4-1/4}
\rightarrow 0
\end{eqnarray*}
as $k\to \infty$, which yields the desired result.
\end{proof}

\section{Supercyclic subspaces}
\label{seccioncinco}

Let us donote by $HC_\infty$ (respectively $SC_\infty$) the subset of operators having a closed infinite dimensional subspace whose non-zero elements are hypercyclic (respec. supercyclic).
In this section we characterize the extended $\lambda$-eigenoperators of $D$ which belong to the subset $SC_\infty\setminus HC_\infty$. 

If $|\lambda|=1$ we know that $L=R_\lambda\phi(D)$ has a hypercyclic subspace. Also, if $0<|\lambda|<1$ and $\phi(0)\neq 0$ then $L=R_\lambda\phi(D)$ is not supercyclic.  Finally, if $|\lambda|>1$ and $\phi$ has infinite zeros then $L=R_\lambda\phi(D)$ has a hypercyclic subspace. Thus, the results in Sections \ref{secciontres} and \ref{seccioncuatro} allow us to focus the study  in the following cases: 1) $0<|\lambda|<1$ and $\phi(0)=0$; and  2) $|\lambda|>1$ and $\phi$ has a finite number of zeros. We fully cover both cases obtaining the following characterization:
\medskip

{\bf Main result 3.} {\it Let $L$ be an extended $\lambda$-eigenoperator of $D$, $\lambda\neq 1$. The following conditions are equivalent:
\begin{enumerate}
\item $L\in SC_\infty\setminus HC_\infty$.
\item $0<|\lambda|<1$ and $\phi(0)=0$.
\end{enumerate}
}

The proof of the next result uses some of the ideas applied in Proposition \ref{infinitosceros} and Theorem \ref{modulo1}.

\begin{theorem}
\label{modulomenor1}
Assume that $L=R_\lambda \phi(D)$.
If $0<|\lambda|< 1$ and $\phi(0)=0$, then $L\in SC_\infty\setminus HC_\infty$.
\end{theorem}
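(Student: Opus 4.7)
The plan is to handle the two inclusions separately. First, $L\notin HC_\infty$ is immediate from Theorem \ref{extended}: because $|\lambda|<1$ the operator $L$ is not even hypercyclic. For the substantive claim $L\in SC_\infty$ I will exploit the renormalisation from the proof of Theorem \ref{super1}: writing $\phi(z)=z^m\psi(z)$ with $\psi(0)\neq 0$, $A_\lambda=R_\lambda\psi(D)$, and $\lambda_k=\lambda^{-mk(k-1)/2}$, the sequence $T_k:=\lambda_k L^k=A_\lambda^k D^{km}$ satisfies the Hypercyclicity Criterion for sequences, and every $x$ whose orbit $\{T_k x\}$ is dense is automatically supercyclic for $L$. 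Hence it suffices to produce an infinite-dimensional closed subspace of such $x$'s by applying Theorem \ref{yes} or Theorem \ref{yes2} (together with Remark \ref{seq}) to the sequence $(T_k)$. The construction splits according to the number of zeros of $\phi$.

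When $\phi$ has infinitely many zeros I follow Proposition \ref{infinitosceros}: choosing a sequence of distinct $\alpha_i$ with $\phi(\alpha_i)=0$, the exponentials $e^{\alpha_i z}$ are linearly independent elements of $\ker\phi(D)=\ker L$, so $M_0:=\ker L$ is an infinite-dimensional closed subspace on which $T_k\equiv 0$, and Theorem \ref{yes} immediately delivers the supercyclic subspace. When $\phi$ has only finitely many zeros, Hadamard factorisation writes $\phi(z)=P(z)e^{bz}$ with $P$ polynomial and $P(0)=0$, so Proposition \ref{semejanza} reduces the problem to $\tilde L:=R_\lambda P(D)$, since $SC_\infty$ is invariant under similarity. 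For this reduced operator I mimic the proof of Theorem \ref{modulo1}: writing $P(z)=z^m Q(z)$ with $Q(0)\neq 0$ and $d:=\deg P$, factoring $\tilde L^n=P(\omega D)\cdots P(\omega^n D)R_\lambda^n$ with $\omega=\lambda^{-1}$, and pulling the common factor $z^{nm}$ out of $\prod_j P(\omega^j z)$, the renormalisation $\tilde T_n=\tilde\lambda_n\tilde L^n$ yields $\tilde T_n z^s=\lambda^{n(s-nm)}D^{nm}R_n(D)z^s$ for $s\geq nm$, where $R_n(z):=\prod_{j=1}^n Q(\omega^j z)$. A routine binomial estimate then gives
\[
\rho_M(\tilde T_n z^s)\leq \widetilde{C_n}\,(n(d-m)+1)\,|\lambda|^{n(s-nm)}\,s^{nd}\,M^{s-nd}\qquad(M\geq 1,\; s\geq nd),
\]
with $\widetilde{C_n}$ bounding the coefficients of $R_r$ for $r\leq n$. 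Choosing a super-exponentially growing sequence $(n_s)$ (for instance $n_{s+1}\geq 2^{n_s}$) and setting $M_j:=\overline{\textrm{linearspan}}\{z^{n_s}:s\geq j+1\}$, I will apply Theorem \ref{yes2} with $m(M)=2M$ via the pointwise bound $\rho_M(\tilde T_{n_j}z^{n_s})\leq C_M(2M)^{n_s}$ for $s\geq j+1$ and $j\geq k(M)$.

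The main obstacle is verifying this last uniform bound. After rewriting $|\lambda|^{n_j(n_s-n_j m)}=(|\lambda|^{n_j})^{n_s}|\lambda|^{-n_j^2 m}$, the inequality reduces to a logarithmic condition of the form $n_s n_j\ln(1/|\lambda|)\gtrsim n_j d\ln n_s+n_j^2 m\ln(1/|\lambda|)+\ln\widetilde{C_{n_j}}+O(n_j)$. The delicate point is that the constants on the right (notably $|\lambda|^{-n_j^2 m}$) blow up quadratically in $n_j$, and they can only be absorbed because $(|\lambda|^{n_j})^{n_s}$ supplies exponential decay in $n_s$ at rate $|\lambda|^{n_j}\to 0$. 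Once $(n_s)$ grows fast enough relative to $n_j$, this gain in $n_s$ beats both the polynomial factor $n_s^{n_j d}$ coming from the binomial coefficients and the $n_j$-dependent prefactors, so the bound holds for all $s\geq j+1$ whenever $j\geq k(M)$. This is precisely why the direct application of Theorem \ref{yes} (which would require a single $M_0$ with $T_k\to 0$) fails outside the case of infinitely many zeros, and why the decreasing-subspace formulation of Theorem \ref{yes2} together with Remark \ref{seq} must be invoked.
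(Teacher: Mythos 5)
Your proposal is correct and follows essentially the same route as the paper: the same renormalisation $\lambda_k=\lambda^{-mk(k-1)/2}$ imported from Theorem \ref{super1}, the same dichotomy (infinitely many zeros handled via $M_0=\ker L$ and Theorem \ref{yes}; finitely many zeros reduced to $R_\lambda P(D)$ by Proposition \ref{semejanza} and then treated by adapting the coefficient estimates of Theorem \ref{modulo1} on the decreasing subspaces $\overline{\textrm{linearspan}}\{z^{n_s}:s>j\}$ via Theorem \ref{yes2} and Remark \ref{seq}), and the same absorption of the $n$-dependent prefactors by the exponential decay $(|\lambda|^{n})^{s}$ along a fast-growing sequence $(n_s)$. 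The only cosmetic difference is that you make the non-membership in $HC_\infty$ and the bookkeeping of the $\lambda$-powers explicit, which the paper leaves implicit.
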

\begin{proof} 
 If $\phi(0)=0$ then $\phi(z)=z^m\psi(z)$ with $\psi(0)\neq 0$. By taking $\lambda_k=(\lambda^m \lambda^{2m}\cdots \lambda^{(k-1)m})^{-1}$ as in the proof of Theorem \ref{super1}, we get that the sequence  $(\lambda_kL^k)$ satisfies the Hypercyclicity Criterion (Theorem \ref{HPforseq}).  
 
If $\phi$ has infinitely many zeros, then  $M_0=\textrm{ker}(L)$ is an infinite dimensional closed subspace and, for  $f\in M_0$,  $\lambda_{n}L^{n}f\to 0$ uniformly on compact subsets.
Then by Theorem \ref{yes} we obtain that the sequence $\lambda_nL^n$ has a hypercyclic subspace. Hence $L$ has a supercyclic subspace.

Now, let us suppose that $\phi$ has a finite number of zeros and $0<|\lambda|<1$. By Proposition \ref{semejanza} we can suppose without loss that  $L=R_\lambda P(D)$ with $P$ a non-constant polynomial.  

Now, we will rescue some ideas of Theorem \ref{modulo1}. 
We set $\omega=\lambda^{-1}$, and we denote $P(z)=\sum_{k=0}^d b_kz^k$.  For each $n\in\N$ we write  
$$
P(\omega z)P(\omega^2 z)\cdots P(\omega^{n}z)=\sum_{k=0}^{nd}b_k^{(n)} z^k.
$$
Let us denote $\widetilde{C_n}=\max\{|b_k^{(r)}| \,:\, 1\leq r\leq n\,,\, 1\leq k\leq r\}$. 
Then
$$
\lambda_n(R_\lambda P(D))^n (z^s)= \sum_{k=0}^{nd} b_k^{(n)} [s(s-1)\cdots (s-k+1)] \frac{\lambda^{ns}}{\lambda^{m}\cdots \lambda^{(n-1)m}} z^{s-k}.
$$
Thus, for $|z|=M$, we get  
$$
|\lambda_n (R_\lambda P(D))^n (z^s)|\leq  \widetilde{C_n} nd  s^{nd} \left| \frac{\lambda^{ns}}{\lambda^{m}\cdots \lambda^{(n-1)m}}\right|M^s.
$$


For each $n$ we can find $M_n>0$ such that, for $x\geq M_n$,  
$$
 nd x^{dn}\leq 2^x \quad \textrm{ and }\quad 
\widetilde{C_n}\left|\frac{\lambda^{nx}}{\lambda^{m}
\cdots \lambda^{(n-1)m}}\right|\leq 1.
$$
Thus, we can construct inductively a strictly increasing sequence $(n_j)$ in $\N$ such that $n_{j+1}\geq M_{n_j}$ and, for $s> j$, 

\begin{equation}
    \label{uno-dos}
    n_j d n_s^{n_jd} \leq 2^{n_s}\quad 
    \textrm{and}\quad 
    \widetilde{C_{n_j}}  \left|\frac{\lambda^{n_jn_s}}{\lambda^{m}
\cdots \lambda^{(n_j-1)m}}\right|\leq 1.
\end{equation}

Hence, using the  estimates (\ref{uno-dos}),  if $f\in N_j= \overline{\textrm{linearspan}} \{z^{n_s}\,: \,s> j\}$ we have $f(z)=\sum_{s=j+1}a_s z^{n_s}$ and 
\begin{eqnarray*}
\rho_M(\lambda_{n_j}L^{n_j} f)&=& \rho_M\left(L^{n_j}\left(\sum_{s=j+1}^{\infty} a_s z^{n_s}\right) \right)\\
&\leq &  \sum_{s=j+1} |a_s| (2M)^{n_s}=  \rho_{2M}(f).
\end{eqnarray*}

Thus, the conditions of Theorem \ref{yes2} are fulfilled if we consider the sequence of seminorms $\rho_n(f)$, and the sequences  $m(n)=2n$ and $C_n=k(n)=1$. Since the sequence of operators $(\lambda_nL^n)$ satisfies the Hypercyclicity Criterion (see Theorem \ref{super1}), $(\lambda_nL^n)$ has a hypercyclic subspace. Hence $L$ has a supercyclic subspace as we desired to prove.
\end{proof}

Now, we turn our attention to the case $L=R_\lambda \phi(D)$ with $|\lambda|>1$ and $\phi$ has a finite (non empty) number of zeros. We proved in Theorem \ref{modulomayoruno} that in such a case all closed subspaces of hypercyclic vectors for $L$ has finite dimension. And the question now is the following: by relaxing hypercyclicity by supercyclicity, could we obtain an infinite dimensional closed subspace $M_1$ such that $x\in M_1\setminus\{0\}$ is supercyclic for $L$?

We discard the case in which $\phi(z)$ has no zeros, because in such a case  $L=R_\lambda \phi(D)$ is a multiple of the composition operator $C_{\lambda,b}f(z)=f(\lambda z+b)$, which is not supercyclic.
Thus we focus our attention in the case that $\phi$ has a finite (nonempty) number of zeros, and by Proposition \ref{semejanza} again we can suppose that $L=R_\lambda P(D)$ with $P$ a non constant polynomial.

Let us point out that proving that an operator is not supercyclic is a more complicated task than proving that it is, as we can see in the work of A. Montes and H.N. Salas (\cite{montessalas}), the proofs of the non-existence of supercyclic subspaces are even more sophisticated. In our case, the refinements are enhanced by Fréchet spaces context and the structure of the involved operators.

We will use the following lemma, whose proof can be founded in \cite[Lemma 10.39]{erdmannperis}, and we refer to \cite{crelle} for a proof in the Banach space setting.

\begin{lemma}
\label{mullertric}
Let $X$ be a Fréchet space, $F$ a finite-dimensional subspace of $X$, $\rho$ a continuous seminorm on $X$ and $\varepsilon>0$. Then there exists a closed subspace $H$ of finite codimension such that for any $x\in F$ and $y \in H$ 
$$
\rho(x+y)\geq \max\left\{ \frac{\rho(x)}{1+\varepsilon},\frac{\rho(y)}{2+\varepsilon}\right\}.
$$
\end{lemma}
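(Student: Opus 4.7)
The plan is to adapt the classical Müller-type trick to the Fréchet space setting, combining the Hahn-Banach theorem with a compactness (finite-net) argument on the unit sphere of the finite-dimensional subspace $F$. A first observation that streamlines the plan is that the second inequality $\rho(x+y)\geq \rho(y)/(2+\varepsilon)$ is a free consequence of the first: assuming $\rho(x+y)\geq \rho(x)/(1+\varepsilon)$, the triangle inequality gives $\rho(y)\leq \rho(x)+\rho(x+y)\leq (2+\varepsilon)\rho(x+y)$. So the essential task reduces to producing $H$ satisfying only the first inequality.

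Before running the main argument, I would normalize the situation to account for the fact that $\rho$ may fail to be a norm on $F$. Writing $F_0=F\cap\ker\rho$ and $F=F_0\oplus G$ for some algebraic complement $G$, one has $\rho(f_0+g)=\rho(g)$ for $f_0\in F_0$, $g\in G$ by the triangle inequality; on the finite-dimensional space $G$, $\rho$ is a genuine norm. The degenerate case $G=\{0\}$ (i.e.\ $\rho\equiv 0$ on $F$) makes the first inequality automatic with $H=X$, so I may assume $G\neq\{0\}$.

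Now fix $\delta=\varepsilon/(1+\varepsilon)$, so that $1-\delta=1/(1+\varepsilon)$, and cover the compact unit sphere $\{g\in G:\rho(g)=1\}$ by finitely many $\rho$-balls of radius $\delta$ with centers $g_1,\dots,g_n$. The Hahn-Banach theorem applied to the continuous seminorm $\rho$ provides continuous linear functionals $\phi_i$ on $X$ with $\phi_i(g_i)=1$ and $|\phi_i(v)|\leq \rho(v)$ for all $v\in X$; in particular $\phi_i$ vanishes on $F_0$. Set $H:=\bigcap_{i=1}^n\ker\phi_i$, a closed subspace of finite codimension. For $x=f_0+g\in F$ with $\rho(x)=\rho(g)=1$ and $y\in H$, choose $g_i$ with $\rho(g-g_i)<\delta$ to obtain
\[
\rho(x+y)\geq |\phi_i(x+y)|=|\phi_i(g)|=|\phi_i(g_i)+\phi_i(g-g_i)|\geq 1-\rho(g-g_i)>\frac{1}{1+\varepsilon},
\]
and homogeneity delivers the inequality for arbitrary $x\in F$ (the case $\rho(x)=0$ being trivial).

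I do not anticipate a serious obstacle: the classical Banach space proof transports almost verbatim, the only Fréchet-specific adjustment being the verification that a continuous seminorm permits Hahn-Banach extensions producing elements of the topological dual, which is immediate in any locally convex space. The main bookkeeping point is the decomposition $F=F_0\oplus G$ ensuring $\rho$ is a norm on the piece where the finite net is constructed; this guarantees both compactness of the sphere and the correct vanishing of the $\phi_i$ on the degenerate part $F_0$.
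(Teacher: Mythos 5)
The paper does not prove this lemma; it quotes it from the literature (Lemma 10.39 of Grosse-Erdmann--Peris, with M\"uller's Crelle paper for the Banach-space case). Your argument is correct and is essentially the standard proof given in those references: the reduction of the second inequality to the first via the triangle inequality, the splitting $F=F_0\oplus G$ modulo $\ker\rho$, the finite $\delta$-net on the compact $\rho$-unit sphere of $G$, and the Hahn--Banach functionals dominated by $\rho$ whose common kernel gives $H$ all check out, including the homogeneity step and the degenerate case $\rho|_F\equiv 0$.
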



Our argument hinges on the following computations.

\begin{lemma}
\label{infinf}
Assume that $m>nd$. If $N_n=\{f\in\HC\,:\,f^{k)}(0)=0, k\leq m-1\}$, then for any $f\in N_n$: 
\begin{equation}
    \label{ecinf}
    \rho_1(f^{(nd)}(\lambda^nz))\geq \frac{m!}{(m-nd)!} |\lambda|^{m\frac{n}{2}-n^2d} \rho_{|\lambda|^{n/2}}(f)
\end{equation}
\begin{proof}
If $f\in N_m$, $f(z)= \sum_{p=m}^\infty a_pz^p$, then
\begin{eqnarray*}
\rho_1(f^{(nd)}(\lambda^nz)&=&\sum_{p=m}^\infty  |a_p|p(p-1)\cdots (p-nd+1) |\lambda|^{n(p-nd)}\\
&\geq & \frac{1}{|\lambda|^{n^2d}} \sum_{p=m}^\infty |a_p| \frac{p!}{(p-nd)!} (|\lambda|^{n/2}|\lambda|^{n/2})^{p} \\
&\geq & \frac{|\lambda|^{m n/2}}{|\lambda|^{n^2d}}\frac{m!}{(m-nd)!} \rho_{|\lambda|^{n/2}}(f)
\end{eqnarray*}
which proves inequality (\ref{ecinf}) as we desired.
\end{proof}
\end{lemma}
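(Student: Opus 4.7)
The plan is a direct computation from the Taylor series. Since every $f\in N_n$ has a zero of order at least $m$ at the origin, I expand $f(z)=\sum_{p=m}^{\infty}a_p z^p$. Differentiating termwise $nd$ times (which is safe because the hypothesis $m>nd$ guarantees every index $p\geq m$ still exceeds $nd$) and substituting $w=\lambda^n z$, I get
$$f^{(nd)}(\lambda^n z)=\sum_{p=m}^{\infty}a_p\,\frac{p!}{(p-nd)!}\,\lambda^{n(p-nd)}\,z^{p-nd}.$$
Because $\rho_1$ is just the $\ell^1$-norm on Taylor coefficients, I can read off
$$\rho_1\bigl(f^{(nd)}(\lambda^n z)\bigr)=\sum_{p=m}^{\infty}|a_p|\,\frac{p!}{(p-nd)!}\,|\lambda|^{n(p-nd)}.$$

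The key algebraic move is to isolate the piece that rebuilds $\rho_{|\lambda|^{n/2}}(f)=\sum_{p\geq m}|a_p||\lambda|^{np/2}$. I rewrite the exponent as $n(p-nd)=np-n^2 d$, pull $|\lambda|^{-n^2 d}$ out as a global constant, and split $|\lambda|^{np}=|\lambda|^{np/2}\cdot|\lambda|^{np/2}$. One half of the split, combined with $|a_p|$, produces the $p$-th term of $\rho_{|\lambda|^{n/2}}(f)$; the other half, combined with the factorial ratio, must be bounded below uniformly for $p\geq m$.

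For the lower bound I use monotonicity. The Pochhammer factor $\frac{p!}{(p-nd)!}=p(p-1)\cdots(p-nd+1)$ is strictly increasing in $p$ for $p\geq nd$, hence at least $\frac{m!}{(m-nd)!}$ whenever $p\geq m$. Simultaneously, $|\lambda|^{np/2}\geq|\lambda|^{mn/2}$ for $p\geq m$ (here implicitly using $|\lambda|\geq 1$, which is the regime where this lemma will be invoked, namely $|\lambda|>1$ in the proof of Theorem \ref{general}). Assembling the two lower bounds termwise, each summand is at least $\frac{m!}{(m-nd)!}|\lambda|^{mn/2}|a_p||\lambda|^{np/2}$, and summing over $p\geq m$ together with the $|\lambda|^{-n^2 d}$ factored out earlier yields exactly \eqref{ecinf}.

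The argument is essentially a careful bookkeeping of exponents, and there is no serious obstacle. The only design choice worth flagging is the symmetric split $|\lambda|^{np}=|\lambda|^{np/2}\cdot|\lambda|^{np/2}$: this is what forces the exponent $mn/2$ to appear in the final estimate and is precisely matched to the choice of seminorm $\rho_{|\lambda|^{n/2}}$ on the right-hand side of the inequality. A different split would give a different (and, for the intended application to Theorem \ref{general}, less useful) lower bound.
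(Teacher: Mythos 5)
Your proof is correct and is essentially identical to the paper's: the same termwise differentiation, the same extraction of $|\lambda|^{-n^2d}$ with the symmetric split $|\lambda|^{np}=|\lambda|^{np/2}\cdot|\lambda|^{np/2}$, and the same termwise lower bounds $\frac{p!}{(p-nd)!}\geq\frac{m!}{(m-nd)!}$ and $|\lambda|^{np/2}\geq|\lambda|^{mn/2}$ for $p\geq m$. Your explicit remark that the last step uses $|\lambda|\geq 1$ is a point the paper leaves implicit, but it is exactly the regime in which the lemma is applied.
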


\begin{lemma}
\label{supsup}
Assume that $L=R_\lambda P(D)$, with $d=\textrm{grad}(P)$.
For each $f\in \HC$,
there is a constant $B>0$ such that
\begin{equation}
    \label{ecsup}
    |L^nf(0)|\leq B^n \frac{((d+1)n-1)!}{(n-1)!} |\lambda|^d|\lambda|^{2d}\cdots |\lambda|^{(n-1)d}\rho_1(f).
\end{equation}
\end{lemma}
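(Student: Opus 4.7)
The plan is to re-express $L^n$ as a polynomial in $D$ acting after a dilation, reduce the evaluation at $0$ to a Taylor coefficient problem, and then bound those coefficients by a greedy/combinatorial argument. Using $DR_\lambda=\lambda R_\lambda D$ with $\omega=\lambda^{-1}$, one obtains $R_\lambda P(D)=P(\omega D)R_\lambda$, and iterating gives $L^n=\Phi_n(D)R_\lambda^n$ with
$$\Phi_n(z)=P(\omega z)P(\omega^2 z)\cdots P(\omega^n z),$$
a polynomial of degree $nd$. Writing $\Phi_n(z)=\sum_{k=0}^{nd} c_k z^k$, the identity $(R_\lambda^n f)^{(k)}(0)=\lambda^{nk}f^{(k)}(0)$ gives
$$L^n f(0)=\sum_{k=0}^{nd} c_k\,\lambda^{nk}\,f^{(k)}(0).$$
Since $|f^{(k)}(0)|=k!|a_k|$ when $f(z)=\sum a_k z^k$, and $\sum_k|a_k|\le\rho_1(f)$, the lemma reduces to the uniform estimate $|c_k|\,|\lambda|^{nk}\,k!\le B^n\frac{((d+1)n-1)!}{(n-1)!}\,|\lambda|^{dn(n-1)/2}$ for $0\le k\le nd$.

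Next I would expand $c_k$ explicitly. Writing $P(z)=\sum_{i=0}^d b_i z^i$ and $B=\max_i|b_i|$,
$$c_k=\sum_{\substack{i_1+\cdots+i_n=k\\ 0\le i_j\le d}} b_{i_1}\cdots b_{i_n}\,\omega^{i_1+2i_2+\cdots+n i_n}.$$
In the regime $|\lambda|>1$ where the lemma is used, $|\omega|<1$, so $|\omega|^{\sum_j j i_j}$ is maximised at the \emph{minimum} $m_k$ of $\sum_j j i_j$. A greedy choice (fill $i_1,i_2,\ldots$ with value $d$ until the budget $k$ is exhausted) yields $m_k=(q+1)k-dq(q+1)/2$ with $q=\lfloor k/d\rfloor$. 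Counting the admissible tuples by $\binom{k+n-1}{n-1}$ then produces $|c_k|\le B^n\binom{k+n-1}{n-1}|\lambda|^{-m_k}$.

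The main obstacle is the arithmetic inequality $nk-m_k\le dn(n-1)/2$ valid for every $0\le k\le nd$, which is the thing that will control the power of $|\lambda|$ to precisely $d+2d+\cdots+(n-1)d$. Substituting the formula for $m_k$ converts it to $(n-q-1)k+dq(q+1)/2\le dn(n-1)/2$. For $q\le n-1$ the bound $k\le(q+1)d$ reduces the left side to $g(q):=d(q+1)(n-q/2-1)$; since $g'(q)=d(n-q-3/2)$, the maximum of $g$ over integer $q$ is attained at $q\in\{n-2,n-1\}$, where in both cases $g(q)=dn(n-1)/2$. The boundary case $q=n$, $k=nd$ is checked directly. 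With this established, $|c_k|\,|\lambda|^{nk}\,k!\le B^n\frac{(k+n-1)!}{(n-1)!}\,|\lambda|^{dn(n-1)/2}$, which is increasing in $k$ and therefore maximised at $k=nd$, producing the factor $((d+1)n-1)!/(n-1)!$. Recalling $|\lambda|^{dn(n-1)/2}=|\lambda|^d|\lambda|^{2d}\cdots|\lambda|^{(n-1)d}$ delivers the statement of the lemma.
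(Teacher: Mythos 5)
Your proof is correct and follows essentially the same route as the paper: factor $L^n=\Phi_n(D)R_\lambda^n$, expand the coefficients of $\Phi_n$ as a sum over tuples $i_1+\cdots+i_n=k$ with $0\le i_j\le d$, bound the number of terms by $\binom{k+n-1}{n-1}$ and each term by $B^n$ times a power of $|\omega|$, and conclude via $\sum_k|f^{(k)}(0)|/k!\le\rho_1(f)$. The only divergence is your arithmetic optimization of $nk-\sum_j j\,i_j$ over $q=\lfloor k/d\rfloor$, which is correct but can be shortcut: for every admissible tuple, $nk-\sum_j j\,i_j=\sum_j(n-j)i_j\le d\sum_{j=1}^{n-1}j=dn(n-1)/2$, which is the bound the paper uses termwise (and, like your argument, it tacitly uses $|\lambda|\ge 1$, the only regime in which the lemma is invoked).
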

\begin{proof}
Observe that
$$
L^n=R_\lambda P(D)\cdots R_\lambda P(D)=\widehat{P_n(D)}R_\lambda^n,
$$
and $\widehat{P_n(z)}=P(\omega z)\cdots P(\omega^nz)$. If we write  $P(z)=\sum_{k=0}^dp_kz^k$ and denote  $B=\max_{k}|p_k|$, then
$$
\widehat{P_n}(z)=\sum_{m=0}^{nd}\frac{a_m^{(n)}}{m!}z^m,
$$
and using Leibnitz's formula we obtain 
$$
a_m^{(n)}=\widehat{P_n}^{m)}(0)=\sum_{h_1+\cdots +h_n=m}\binom{m}{h_1\cdots h_n}\prod_{t=1}^n(\omega^{th_t}p_{h_t}h_t!),
$$
from which we get 
\begin{eqnarray*}
|L^nf(0)|&\leq &\sum_{m=0}^{nd} \frac{1}{m!} \sum_{h_1+\cdots +h_n=m}\binom{m}{h_1\cdots h_n}\prod_{t=1}^n(|\omega|^{th_t}|p_{h_t}|h_t!)|D^mR_{\lambda}^nf(0)|     \\
&\leq & \sum_{m=0}^{nd} \frac{1}{m!} \sum_{h_1+\cdots +h_n=m} m! |\lambda|^{nm-h_1-\cdots-nh_n} B^n |f^{(m)}(0)|\\
&\leq & B^n |\lambda|^d\cdots |\lambda|^{(n-1)d}\sum_{m=0}^{nd} \binom{m+n-1}{m}|f^{(m)}(0)|\\
&\leq & B^n |\lambda|^d\cdots |\lambda|^{(n-1)d} \frac{(nd+n-1)!}{(n-1)!}\sum_{m=0}^{nd} \frac{|f^{(m)}(0)|}{m!}\\
&\leq &B^n |\lambda|^d\cdots |\lambda|^{(n-1)d} \frac{(nd+n-1)!}{(n-1)!} \rho_1(f).
\end{eqnarray*}
as promised.
\end{proof}

Now let us prove the main result that complete the Table \ref{tabla3}. 

\begin{theorem}
\label{general}
Assume that $L=R_\lambda p(D) $ and $|\lambda|>1$. Then all closed subspaces of supercyclic vectors for $L$ have finite dimension.
\end{theorem}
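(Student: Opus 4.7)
The plan is to follow the same Montes--Salas/Menet strategy used in Theorem \ref{modulomayoruno}, but replacing the single-seminorm blow-up by a blow-up of the \emph{ratio} $\rho_1(L^n h)/|L^n h(0)|$, which is the quantity that governs supercyclicity rather than hypercyclicity. Recall that $P$ is a non-constant polynomial of degree $d$, a reduction permitted by Proposition \ref{semejanza}.

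First I would combine the three asymptotic estimates of Lemmas \ref{iteracionpolinomio}, \ref{infinf} and \ref{supsup} on the tower of finite-codimension subspaces $N_{m_n}$. By Remark \ref{observacionpolinomio} one may pass to a subsequence with $m_n\geq 3nd$, whereupon Lemma \ref{infinf} yields $|\lambda|^{m_n n/2-n^2d}\geq |\lambda|^{n^2d/2}$ and $\rho_{|\lambda|^{n/2}}(h)\geq\rho_1(h)$ (since $|\lambda|>1$). Dividing the lower bound coming from Lemmas \ref{iteracionpolinomio} and \ref{infinf} by the upper bound of Lemma \ref{supsup}, the common factor $|\lambda|^{dn(n-1)/2}$ cancels and one obtains
\[
\frac{\rho_1(L^n h)}{|L^n h(0)|}\ \geq\ A_n,\qquad h\in N_{m_n},\qquad A_n\to\infty,
\]
the blow-up of $A_n$ being driven by $|\lambda|^{n^2 d/2}$.

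Assume for contradiction that $M\subset\HC$ is an infinite-dimensional closed subspace whose non-zero elements are all supercyclic for $L$. Write $P_m$ for the Taylor projection onto polynomials of degree $<m$ and pick $B_n>0$ with $\max(\rho_1(L^n p),|L^n p(0)|)\leq B_n\rho_1(p)$ for every $p$ in the finite-dimensional space $F_{m_n}=\mathrm{range}(P_{m_n})$; such a $B_n$ exists by continuity of $L^n$ on the finite-dimensional space $F_{m_n}$. The main task is then to produce, by a diagonal iteration of Müller's Lemma (Lemma \ref{mullertric}) combined with the fact that $M\cap H$ is infinite-dimensional for every closed $H\subset\HC$ of finite codimension, a single vector $f\in M$ with $\rho_1(f)=1$ and a subsequence $(n_k)$ along which $\rho_1(P_{m_{n_k}}f)\leq\varepsilon_k$, where $(\varepsilon_k)$ is prescribed in advance so that both $B_{n_k}\varepsilon_k\to 0$ and $B_{n_k}\varepsilon_k A_{n_k}\to 0$.

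Once such an $f$ is in hand, I decompose $L^{n_k}f=L^{n_k}P_{m_{n_k}}f+L^{n_k}g_k$ with $g_k:=(I-P_{m_{n_k}})f\in N_{m_{n_k}}$ and $\rho_1(g_k)\geq 1/2$. The polynomial head is negligible in both $\rho_1$ and at $z=0$ by the calibration of $\varepsilon_k$, while the tail $g_k$ inherits from the first step the estimate $\rho_1(L^{n_k}g_k)\geq A_{n_k}|L^{n_k}g_k(0)|$ together with a very large absolute lower bound on $\rho_1(L^{n_k}g_k)$. A straightforward triangle-inequality argument then forces $\rho_1(L^{n_k}f)/|L^{n_k}f(0)|\to\infty$. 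But the supposed supercyclicity of $f$ would furnish a sub-subsequence $(n_{k_j})$ and scalars $(\alpha_j)$ with $\alpha_jL^{n_{k_j}}f\to e^z$ uniformly on compact sets, pinning that very ratio to the finite value $\rho_1(e^z)/|e^0|=e$, the desired contradiction. The principal obstacle is the diagonal Müller construction: the decrements $\varepsilon_k$ must be calibrated to overwhelm simultaneously the operator-norm growth $B_{n_k}$ on the finite-dimensional heads and the constants $A_{n_k}$ themselves, while the Fréchet iteration must still leave a non-zero limit $f\in M$---this is precisely where the mixture of the techniques from \cite{menet} and \cite{montessalas} becomes essential.
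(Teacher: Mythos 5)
Your overall strategy---comparing $\rho_1(L^nf)$ with $|L^nf(0)|$ by combining Lemmas \ref{iteracionpolinomio}, \ref{infinf} and \ref{supsup} with M\"uller's Lemma \ref{mullertric}, and noting that the factor $|\lambda|^{dn(n-1)/2}$ cancels in the ratio---is the same as the paper's. But the construction you propose for the vector $f$ cannot be carried out. You require $f\in M$ with $\rho_1(f)=1$ and a subsequence along which $\rho_1(P_{m_{n_k}}f)\le\varepsilon_k\to 0$, where $P_m$ is the Taylor projection onto degree $<m$. For any entire $f=\sum_j a_jz^j$ one has $\rho_1(P_mf)=\sum_{j<m}|a_j|\nearrow\rho_1(f)=1$ as $m\to\infty$, so the heads of a fixed nonzero $f$ converge to $\rho_1(f)$ and can never tend to $0$. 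The calibration $B_{n_k}\varepsilon_k\to0$ and $B_{n_k}\varepsilon_kA_{n_k}\to0$ therefore rests on an impossible premise, and the triangle-inequality step that is meant to transfer the blow-up from the tail $g_k$ to $f$ collapses with it.

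The paper's proof never makes the head small. It builds $f=\sum_k f_k$ with $f_n\in M\cap N_{m_n}$, $p_n(f_n)=1/n^2$ and $L^nf_n\in H_{n-1}$, and then: (i) for the numerator it uses that $L^nf(0)$ depends only on the Taylor coefficients of $f$ of order $\le nd<m_n$, so that $L^nf(0)=L^n\bigl(\sum_{j<n}f_j\bigr)(0)$ is merely \emph{bounded} (by Lemma \ref{supsup} and $\sum_j p_j(f_j)\le\pi^2/6$), not small; (ii) for the denominator it applies M\"uller's lemma twice so that neither the head $\sum_{j<n}f_j$ nor the far tail can cancel the dominant term $L^nf_n$, whose $\rho_1$-norm is then bounded below via Lemmas \ref{iteracionpolinomio} and \ref{infinf} by a quantity carrying the extra factor $|\lambda|^{nm_n/2-dn^2}$. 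This yields convergence of the ratio to $0$ for \emph{all} $n$, which also matters for your endgame: supercyclicity of $f$ only furnishes scalars and indices $\alpha_jL^{\nu_j}f\to e^z$ with $(\nu_j)$ not of your choosing, so a blow-up established only along your constructed subsequence $(n_k)$ would not by itself contradict supercyclicity. You would need the estimate on the full sequence (as the paper has), and your plan does not provide it.
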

\begin{proof}
We want to show that in any infinite dimensional closed subspace $M\subset \HC$  there exists $f\in M\setminus\{0\}$ such that 
\begin{equation}
\label{objetivofinal}
\frac{|L^nf(0)|}{\rho_{1}(L^nf)} \to 0\quad \textrm{as}\quad n\to \infty.
\end{equation}
Thus $f$ is not supercyclic for $L$, hence $L$ has no supercyclic subspace.

The sequence of seminorms   $p_n(\cdot)=\rho_{|\lambda|^; {n/2}} (\cdot)$ induces the topology of $\HC$. 
Let  $N_n=\{f\in\HC\,:\,f^{k}(0)=0; \, k=1,\cdots, n-1\}$, and let $(m_n)_n$ be the increasing sequence obtained in Lemma \ref{iteracionpolinomio}.Thus there is a constant $c>0$ such that for each $h\in N_{m_n}$:
\begin{equation}
    \label{siguiente}
\rho_1(L^nh)\geq c^n 1\cdot |\lambda|^d\cdot |\lambda|^{2d}\cdots |\lambda|^{(n-1)d} \rho_1(h^{nd)}(\lambda^n z)).
\end{equation}

By Remark \ref{observacionpolinomio}, we can assume without loss that $m_n>dn$, otherwise we select a convenient subsequence of $(m_n)$. The idea of the proof is to construct a sequence $f_n\in  M\cap N_{m_n}$ satisfying 
$p_n(f_{n})=\frac{1}{n^2}$ for $n\geq 0$ and 
$L^nf_n\in H_{n-1}$ for $j=1,...,n$. 

Here $H_{n-1}$ is the finite codimensional subspace guaranteed by Lemma \ref{mullertric} associated to $F_{n-1}=\textrm{span} \{T^jf_i\,:i\leq n-1,j\leq n\}$.

Indeed, we take $f_1\in M\setminus \{0\}$. Then  $f_1$ is supercyclic for $L$, and since $p_1(f_1)\neq 0$, we can suppose that $p_1(f_1)=1$.
Let us consider  the subspace $F_1=\textrm{span}\{T^{j}f_1\,:\, j=1,2\}$ of finite dimension and the corresponging subspace $H_1$  of finite codimension guaranteed by Lemma \ref{mullertric}. 

Assume that $f_1,\cdots,f_{n-1}$ have been already been constructed. Let us consider
$F_{n-1}=\textrm{span}\{T^jf_i\,:i\leq n-1,j\leq n\}$ and by $H_{n-1}$ the corresponding finite codimension subspace guaranteed by Lemma \ref{mullertric}.

Since $N_{m_n}$ is of finite codimension, there exists $f_n\in M\setminus\{0\}\cap N_{m_n}$, such that $T^nf_{n}\in H_{n-1}$. Again $f_n$ is supercyclic and $p_n(f_n)\neq 0$, so we can suppose that $p_n(f_n)=\frac{1}{n^2}$.

We claim that $f=\sum_{k=1}^\infty f_k$ is the function in $M\setminus\{0\}$ that we are looking for. Indeed, since $f_n\in M\setminus\{0\}$, $p_n(f_n)=1/n^2$ and $(p_n)$ is an increasing sequence of seminorms defining the topology of $\HC$, the series
$\sum_{k=1}^\infty f_k$ defines an entire  function  $f\in M\setminus\{0\}$.

Now, with this construction in hand, we estimate the denominator  of (\ref{objetivofinal}).
First we show that  $\rho_1(L^nf)\geq \frac{1}{(\varepsilon+1) (\varepsilon+2)} \rho_1(L^nf_n)$.  Indeed

\begin{eqnarray}
\notag
\rho_1(L^nf)&=& \rho_1\left(L^n\left(\sum_{j=1}^{n} f_j\right)+L^n\left(\sum_{j=n+1}^\infty f_j\right)\right) \\
&\geq &\frac{1}{1+\varepsilon}\rho_1\left(L^n\left(\sum_{j=1}^{n-1} f_j\right)+L^nf_n\right) \label{cola}\\
&\geq &\frac{1}{1+\varepsilon} \frac{1}{2+\varepsilon}
\rho_1\left(L^nf_n\right) \label{cabeza}
\end{eqnarray}

For inequality (\ref{cola}) we have applied Lemma \ref{mullertric}: $L^n(\sum_{j=1}^nf_j)\in F_n$ and $L^n(\sum_{j=k+1}^\infty f_j)\in H_{n}$. And (\ref{cabeza}) follows the same way:
$L^n(\sum_{j=1}^{n-1}f_j)\in F_{n-1} $ and $L^n f_n\in H_{n-1}$. 

Next, since $f_n\in M_{m_n}$,  applying (\ref{siguiente}) to  (\ref{cola}) and (\ref{cabeza}) we get  
\begin{eqnarray}
\notag
\rho_1(L^nf)&\geq &\frac{1}{1+\varepsilon} \frac{1}{2+\varepsilon}
\rho_1\left(L^nf_n\right)\\
&\geq & \frac{c^n 1\cdot |\lambda|^d\cdot |\lambda|^{2d}\cdots |\lambda|^{(n-1)d}}{(1+\varepsilon)(2+\varepsilon)} \rho_1(f_n^{nd)}(\lambda^n z)).\label{penultimo}
\end{eqnarray}

According to Proposition \ref{infinf}, since $f_n\in M_{m_n}$,
we incorporate inequality (\ref{ecinf}) into inequality (\ref{penultimo}) and we get 
\begin{eqnarray}
\notag
\rho_1(L^nf)&\geq & \frac{c^n 1\cdot |\lambda|^d\cdot |\lambda|^{2d}\cdots |\lambda|^{(n-1)d}}{(1+\varepsilon)(2+\varepsilon)} \frac{|\lambda|^{m_n n/2}}{|\lambda|^{n^2d}}\frac{m_n!}{(m_n-nd)!} \rho_{|\lambda|^{n/2}}(f_n) \\
&=& \frac{c^n 1\cdot |\lambda|^d\cdot |\lambda|^{2d}\cdots |\lambda|^{(n-1)d}}{(1+\varepsilon)(2+\varepsilon)} \frac{|\lambda|^{m_n n/2}}{|\lambda|^{n^2d}}\frac{m_n!}{(m_n-nd)!}\frac{1}{n^2}. \label{final1}
\end{eqnarray}

Now we estimate the numerator of $(\ref{objetivofinal})$.
Since $m_n>dn$, we deduce that $L^nf_j(0)=0$ for all $j\geq n$. Therefore, according to Lemma \ref{supsup}, we get 
\begin{eqnarray}
\notag
    |L^nf(0)|&=&\left|L^n\left(\sum_{j=1}^{n-1}f_j\right)(0)\right|\\\notag
    &\leq & B^n \frac{((d+1)n-1)!}{(n-1)!} |\lambda|^d|\lambda|^{2d}\cdots |\lambda|^{(n-1)d}\rho_1\left(\sum_{j=1}^{n-1}f_j\right)\\ \notag
    &\leq & B^n \frac{((d+1)n-1)!}{(n-1)!} |\lambda|^d|\lambda|^{2d}\cdots |\lambda|^{(n-1)d}\sum_{j=1}^{n-1}p_j(f_j) \\ 
    &\leq &\frac{\pi^2}{6} B^n \frac{((d+1)n-1)!}{(n-1)!} |\lambda|^d|\lambda|^{2d}\cdots |\lambda|^{(n-1)d}.\label{final2}
\end{eqnarray}

Therefore, if we compute (\ref{objetivofinal}) using the inequalities (\ref{final1}) and (\ref{final2}) we obtain:
\begin{eqnarray}
\label{FIN}
\frac{|L^nf(0)|}{\rho_{1}(L^nf)}  &\leq & \frac{\frac{\pi^2}{6} B^n(1+\varepsilon)(2+\varepsilon)n^2}{c^n |\lambda|^{\frac{nm_n}{2}-dn^2}},
\end{eqnarray}

Choosing a subsequence of $(m_n)$ if necessary, we get that the sequence (\ref{FIN}) converges to zero, as promised.
\end{proof}


\begin{thebibliography}{10}

\bibitem{aron}
Richard Aron and Dinesh Markose.
\newblock {On universal functions}.
\newblock {\em Journal of the Korean Mathematical Society}, 41(1):65--76, 2004.

\bibitem{bayartmatheron}
Fr\'{e}d\'{e}ric Bayart and \'{E}tienne Matheron.
\newblock {\em Dynamics of linear operators}, volume 179 of {\em Cambridge
  Tracts in Mathematics}.
\newblock Cambridge University Press, Cambridge, 2009.

\bibitem{jfa}
Ikram Fatima~Zohra Bensaid, Manuel Gonz\'{a}lez, Fernando Le\'{o}n-Saavedra,
  and Mar\'{\i}a~Pilar Romero de~la Rosa.
\newblock Hypercyclicity of operators that {$\lambda$}-commute with the
  differentiation operator on the space of entire functions.
\newblock {\em J. Funct. Anal.}, 282(8):Paper No. 109391, 23, 2022.

\bibitem{bernalbonillacalderon}
L.~Bernal-Gonz\'{a}lez, A.~Bonilla, and M.~C. Calder\'{o}n-Moreno.
\newblock Compositional hypercyclicity equals supercyclicity.
\newblock {\em Houston J. Math.}, 33(2):581--591, 2007.

\bibitem{birkhof}
G.D. Birkhoff.
\newblock D\'emonstration d'un th\'eor\`eme \'el\'ementaire sur les fonctions
  enti\`eres.
\newblock {\em C. R. Acad. Sci., Paris}, 189(473-475), 1929.

\bibitem{bonilla}
A.~Bonilla and K.-G. Grosse-Erdmann.
\newblock On a theorem of {G}odefroy and {S}hapiro.
\newblock {\em Integral Equations Operator Theory}, 56(2):151--162, 2006.

\bibitem{inv1}
Scott Brown.
\newblock Connections between an operator and a compact operator that yield
  hyperinvariant subspaces.
\newblock {\em J. Operator Theory}, 1(1):117--121, 1979.

\bibitem{ferandezhallack}
Gustavo Fern\'{a}ndez and Andr\'{e}~Arbex Hallack.
\newblock Remarks on a result about hypercyclic non-convolution operators.
\newblock {\em J. Math. Anal. Appl.}, 309(1):52--55, 2005.

\bibitem{godefroyshapiro}
Gilles Godefroy and Joel~H. Shapiro.
\newblock Operators with dense, invariant, cyclic vector manifolds.
\newblock {\em J. Funct. Anal.}, 98(2):229--269, 1991.

\bibitem{gonzalezleonmontes}
Manuel Gonz\'{a}lez, Fernando Le\'{o}n-Saavedra, and Alfonso
  Montes-Rodr\'{\i}guez.
\newblock Semi-{F}redholm theory: hypercyclic and supercyclic subspaces.
\newblock {\em Proc. London Math. Soc. (3)}, 81(1):169--189, 2000.

\bibitem{erdmannperis}
Karl-G. Grosse-Erdmann and Alfredo Peris~Manguillot.
\newblock {\em Linear chaos}.
\newblock Universitext. Springer, London, 2011.

\bibitem{inv2}
H.~W. Kim, R.~Moore, and C.~M. Pearcy.
\newblock A variation of {L}omonosov's theorem.
\newblock {\em J. Operator Theory}, 2(1):131--140, 1979.

\bibitem{leonmuller}
Fernando Le\'{o}n-Saavedra and Vladim\'{\i}r M\"{u}ller.
\newblock Hypercyclic sequences of operators.
\newblock {\em Studia Math.}, 175(1):1--18, 2006.

\bibitem{pilar}
Fernando Le\'{o}n-Saavedra and Pilar Romero-de~la Rosa.
\newblock Fixed points and orbits of non-convolution operators.
\newblock {\em Fixed Point Theory Appl.}, pages 2014:221, 5, 2014.

\bibitem{freq}
F.~León-Saavedra and M.~P. Romero de~la Rosa.
\newblock A note on frequent hypercyclicity of operators that $\lambda$-commute
  with the differentiation operator.
\newblock {\em J. Math. Sci.}, ((to appear)):1--9, 2022.

\bibitem{maclane}
G.R. MacLane.
\newblock Sequences of derivatives and normal families.
\newblock {\em J. Anal. Math.}, 2(2):72 – 87, 1952.

\bibitem{menet}
Quentin Menet.
\newblock Hypercyclic subspaces and weighted shifts.
\newblock {\em Adv. Math.}, 255:305--337, 2014.

\bibitem{montessalas}
Alfonso Montes-Rodr\'{\i}guez and H\'{e}ctor~N. Salas.
\newblock Supercyclic subspaces: spectral theory and weighted shifts.
\newblock {\em Adv. Math.}, 163(1):74--134, 2001.

\bibitem{crelle}
V.~M\"{u}ller.
\newblock Local behaviour of the polynomial calculus of operators.
\newblock {\em J. Reine Angew. Math.}, 430:61--68, 1992.

\bibitem{petersson}
Henrik Petersson.
\newblock Hypercyclic subspaces for {F}r\'{e}chet space operators.
\newblock {\em J. Math. Anal. Appl.}, 319(2):764--782, 2006.

\bibitem{shkarin}
Stanislav Shkarin.
\newblock On the set of hypercyclic vectors for the differentiation operator.
\newblock {\em Israel J. Math.}, 180:271--283, 2010.

\end{thebibliography}
\end{document}